\newtheorem{observation}{Observation}
\newtheorem{conj}{Conjecture}
\newcounter{claim}
\renewenvironment{claim}{\refstepcounter{claim}\medskip\par\noindent{\itshape Claim~\theclaim}.}{\medskip\\}
\newcommand{\Once}{\mathrm{Once}}
\newcommand{\Twice}{\mathrm{Twice}}
\newcommand{\Missing}{\mathrm{Missing}}
\newcommand{\Colors}{\mathrm{Colors}}
\newcommand{\com}[1]{}
\begin{document}
\title{The linear arboricity conjecture for graphs of low degeneracy}
\author{Manu Basavaraju\inst{1}\and Arijit Bishnu\inst{2} \and Mathew Francis\inst{3} \and Drimit Pattanayak\inst{2}}
\institute{National Institute of Technology Karnataka, Surathkal. Email: \texttt{manub@nitk.edu.in} \and Indian Statistical Institute, Kolkata.\\Email: \texttt{arijit@isical.ac.in,drimitpattanayak@gmail.com} \and Indian Statistical Institute, Chennai Centre. Email: \texttt{mathew@isichennai.res.in}}
\date{}
\bibliographystyle{plain}
\maketitle
\pagestyle{plain}
\begin{abstract}
A linear forest is an acyclic graph whose each connected component is a path; or in other words, it is an acyclic graph whose maximum degree is at most 2. A \emph{linear coloring} of a graph $G$ is an edge coloring of $G$ such that the edges in each color class form a linear forest.
% and a \emph{$k$-linear coloring} of $G$ is a linear coloring of $G$ using $k$ colors.
The \emph{linear arboricity} of $G$, denoted as $\chi'_l(G)$, is the minimum number of colors required in any linear coloring of $G$. It is easy to see that for any graph $G$, $\chi'_l(G)\geq\left\lceil\frac{\Delta(G)}{2}\right\rceil$, where $\Delta(G)$ is the maximum degree of $G$. The Linear Arboricity Conjecture of Akiyama, Exoo and Harary from 1980 states that for every graph $G$, $\chi'_l(G)\leq \left \lceil \frac{\Delta(G)+1}{2}\right\rceil$. Basavaraju et al.~\cite{conf}
%~[The linear arboricity conjecture for 3-degenerate graphs. {\em Proceedings of the 46th International Workshop on Graph-Theoretic Concepts in Computer Science, {WG} 2020}, pages 376--387, 2020.]
showed that the conjecture is true for 3-degenerate graphs and provided a linear time algorithm for computing a linear coloring using at most $\left\lceil\frac{\Delta(G)+1}{2}\right\rceil$ colors for any input 3-degenerate graph $G$. Recently, Chen, Hao and Yu~\cite{ChenHaoYu}
%~[Linear arboricity of degenerate graphs. {\em Journal of Graph Theory}, available online, doi: \verb+https://doi.org/10.1002/jgt.22967+, 2023]
showed that $\chi'_l(G)=\left\lceil\frac{\Delta(G)}{2}\right\rceil$ for any $k$-degenerate graph $G$ having $\Delta(G)\geq 2k^2-k$.
From this result, we have $\chi'_l(G)=\left\lceil\frac{\Delta(G)}{2}\right\rceil$ 
for every 3-degenerate graph $G$ having $\Delta(G)\geq 15$. We show that this equality holds for every 3-degenerate graph $G$ having $\Delta(G)\geq 9$. Moreover, by extending the techniques used, we show a different proof for the Linear Arboricity Conjecture on 3-degenerate graphs.
This new proof has the advantage that it gives rise to a linear time algorithm that computes a linear coloring using at most $\left\lceil\frac{\Delta(G)+1}{2}\right\rceil$ colors for any input 3-degenerate graph $G$ which is simpler to implement and analyze than the one of Basavaraju et al.~\cite{conf}. Moreover, the linear coloring computed by the algorithm is optimal if $\Delta(G)\geq 9$, and the coloring has the property that the two edges incident on a vertex of degree 2 will always have different colors. Next, we prove that for every 2-degenerate graph $G$, $\chi'_l(G)=\left\lceil\frac{\Delta(G)}{2}\right\rceil$ if $\Delta(G)\geq 5$. We conjecture that this equality holds also when $\Delta(G)\in\{3,4\}$ and show that this is the case for some well-known subclasses of 2-degenerate graphs. These proofs can also be converted into linear time algorithms that generate optimal linear colorings for input 2-degenerate graphs of the respective types.
\end{abstract}

\section{Introduction}
All graphs considered in this paper are finite, simple and undirected. We denote the vertex set of a graph $G$ by $V(G)$ and its edge set by $E(G)$. For a vertex $u\in V(G)$, we denote by $N_G(u)$ the set of neighbors of $u$, i.e. $N_G(u)=\{v\in V(G)\colon uv\in E(G)\}$. We abbreviate $N_G(u)$ to just $N(u)$ when the graph $G$ is clear from the context. The \emph{degree} of a vertex $u\in V(G)$ is $|N(u)|$ and is denoted by $d_G(u)$, which is sometimes abbreviated to $d(u)$ when the graph $G$ is clear from the context. We denote by $\Delta(G)$ the \emph{maximum degree} of a graph $G$; i.e. $\Delta(G)=\max\{d_G(u)\colon u\in V(G)\}$. When the graph $G$ under consideration is clear, we sometimes abbreviate $\Delta(G)$ to just $\Delta$. We say that an edge $uv$ of a graph is ``incident to'' the vertices $u$ and $v$. For any terms not defined here, please refer to~\cite{diestel}.

An \emph{edge coloring} of a graph $G$ using the colors $\{1,2,\ldots,k\}$ is a mapping $c:E(G)\rightarrow\{1,2,\ldots,k\}$. Given an edge coloring $c$ of a graph $G$, we say that $c(e)$ ``is the color of'' the edge $e\in E(G)$, or that the edge $e$ ``has'' or ``is colored with'' the color $c(e)$. Given an edge coloring using colors $\{1,2,\ldots,k\}$, the \emph{color class $i$}, for some $i\in\{1,2,\ldots,k\}$, is the set of edges colored with the color $i$; i.e. it is the set $c^{-1}(i)=\{e\in E(G)\colon c(e)=i\}$. A \emph{proper edge coloring} is an edge coloring in which no two edges incident on a vertex are colored with the same color.

An acyclic graph, which is a graph whose each connected component is a tree, is also known as a \emph{forest}. The \emph{arboricity} of a graph is the minimum integer $k$ such that its edge set can be partitioned into $k$ forests. In other words, it is the minimum integer $k$ such that there is an edge coloring of the graph in which each color class is a forest. A \emph{linear forest} is a forest whose each connected component is a path; it is easily that linear forests are exactly the acyclic graphs having maximum degree at most 2. The \emph{linear arboricity} of a graph is the minimum number of linear forests into which its edge set can be partitioned.

A \emph{$k$-linear coloring} of a graph $G$ is an edge coloring of $G$ using the colors $\{1,2,\ldots,k\}$ in which each color class is a linear forest. Or in other words, it is an edge coloring in which every vertex has at most two edges of the same color incident to it and there is no ``monochromatic cycle'' in the graph --- a cycle whose edges all receive the same color. The linear arboricity of a graph $G$ is clearly the smallest integer $k$ such that it has a $k$-linear coloring and is denoted by $\chi'_l(G)$. The parameter $\chi'_l(G)$ was introduced by Harary~\cite{Harary}. The Linear Arboricity Conjecture, first stated by Akiyama, Exoo and Harary~\cite{A1}, is as follows.

\begin{conj}[Linear Arboricity Conjecture]
For every graph $G$, $$\chi'_l(G)\leq\left\lceil\frac{\Delta(G)+1}{2}\right\rceil.$$
\end{conj}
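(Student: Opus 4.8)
The final statement is the Linear Arboricity Conjecture in full generality, which is a famous open problem; what follows is therefore a plan for the strongest known line of attack, together with an honest account of exactly where it breaks down.

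The plan is to follow the probabilistic strategy introduced by Alon. First I would reduce to regular graphs: every graph of maximum degree $\Delta$ is a subgraph of some $\Delta$-regular graph $H$, and since deleting edges from a linear forest again leaves a linear forest, any $k$-linear coloring of $H$ restricts to one of $G$. For regular $H$ the target value is genuinely tight --- a simple edge count ($|E(H)|=n\Delta/2$ against at most $n-1$ edges per linear forest) forces $\chi'_l(H)\ge\lceil(\Delta+1)/2\rceil$ when $\Delta$ is even --- so nothing is lost in the reduction.

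Next I would orient the edges. When $\Delta=2d$ the graph is Eulerian, so an Euler tour induces an orientation with in-degree and out-degree exactly $d$ at every vertex (the odd case needs only a minor initial adjustment to make all degrees even). The purpose of orienting is that a $k$-linear coloring corresponds to partitioning the arcs into $k$ classes, in each of which every vertex has in-degree and out-degree at most one and no directed cycle appears. I would then color each arc at random and invoke the Lov\'asz Local Lemma to avoid the two families of bad events --- a vertex accumulating three like-colored incident edges, and the closing up of a monochromatic cycle --- both of which are sufficiently local for the symmetric local lemma to apply.

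The hard part, and precisely the reason the conjecture is still unproved, is quantitative: the local-lemma computation yields only $\chi'_l(G)\le\Delta/2+o(\Delta)$ (with best known explicit error of order roughly $\Delta^{2/3}(\log\Delta)^{1/3}$), which agrees with $\lceil(\Delta+1)/2\rceil$ only up to a vanishing lower-order term. Removing that term --- pinning down the exact constant rather than an asymptotic one --- lies beyond every technique currently available for arbitrary $G$, and I would not expect a random coloring to achieve it. This is exactly why the present paper restricts attention to bounded-degeneracy graphs: a degeneracy ordering furnishes a final vertex of small degree that can be peeled off and recolored, enabling a clean induction that the probabilistic method cannot imitate on a general graph. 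I would therefore treat the degeneracy-restricted theorems, rather than the full conjecture, as the realistically achievable goal and organize the paper around them.
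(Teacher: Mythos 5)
You have correctly recognized that this statement is not a theorem of the paper at all: it is the Linear Arboricity Conjecture of Akiyama, Exoo and Harary, stated here as Conjecture~1, and the paper offers no proof of it --- it proves only the special cases of 3-degenerate graphs (Theorem~\ref{thm:3-deg}) and sharper bounds for 2-degenerate graphs. So there is no in-paper proof to compare against, and your decision to present a plan plus an honest account of where it fails is the right call. Your survey is accurate: the reduction to $\Delta$-regular graphs is sound (every graph of maximum degree $\Delta$ embeds in a $\Delta$-regular graph, possibly on a larger vertex set, and any $k$-linear coloring restricts to subgraphs); your edge-count lower bound $\chi'_l(H)\geq\left\lceil\frac{\Delta+1}{2}\right\rceil$ for regular $H$ is exactly Harary's observation that the paper cites; and your description of the probabilistic route and its quantitative limit matches the works of Alon, Alon--Spencer, and Ferber et al.\ that the paper references for the asymptotic statement $\chi'_l(G)\leq(\frac{1}{2}+\epsilon)\Delta$ for large $\Delta$.

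Two technical caveats, should you ever flesh out the sketch. First, the Euler-tour orientation with in- and out-degree exactly $d$ at every vertex requires all degrees even, and the ``minor initial adjustment'' for odd $\Delta$ is not cosmetic; the known arguments treat the odd case by a separate reduction rather than a small perturbation. Second, a monochromatic cycle is not a bounded-dependence event as stated: cycles can be arbitrarily long, so a naive symmetric Local Lemma application fails, and Alon's actual proof first passes to (or randomly extracts) subgraphs of large girth precisely so that the cycle events become controllable. Neither caveat changes your conclusion --- the lower-order additive loss is intrinsic to these methods, which is exactly why the paper organizes its contribution around degeneracy-restricted classes, where a pivot edge can be peeled off, the smaller graph colored inductively, and the removed edges recolored by a finite case analysis. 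Your proposal correctly identifies that inductive mechanism as the thing the probabilistic method cannot imitate, and rightly treats the restricted theorems, not the full conjecture, as the achievable goal.
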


Note that for any graph $G$, $\chi'_l(G)\geq\left\lceil\frac{\Delta(G)}{2}\right\rceil$, since in any linear coloring of $G$, there can be at most 2 edges of the same color incident to any vertex. But every graph does not have a linear coloring using just these many colors; Harary~\cite{Harary} observes that if $G$ is a $\Delta(G)$-regular graph, then $\chi'_l(G)\geq\left\lceil\frac{\Delta(G)+1}{2}\right\rceil$. The Linear Arboricity Conjecture suggests that this lower bound for regular graphs is tight --- in fact, it is enough to prove the conjecture regular graphs, since every graph $G$ is the subgraph of a $\Delta(G)$-regular graph.
Please see~\cite{ChenHaoYu} or~\cite{conf} for a brief history of the conjecture and the partial results that have been obtained. The Linear Arboricity Conjecture is known to hold for $k$-degenerate graphs for $k\leq 3$ (see~\cite{conf}).
\medskip

For some graph classes, it is known that the linear arboricity matches the trivial lower bound of $\lceil\frac{\Delta}{2}\rceil$. Wu~\cite{Wu1} first showed that every planar graph $G$ having $\Delta(G)\geq 13$ satisfied this property; i.e. $\chi'_l(G)=\lceil\frac{\Delta(G)}{2}\rceil$. He also showed that every planar graph $G$ with girth $g(G)$ satisfied this property if $\Delta(G)\geq 7$ and $g(G)\geq 4$, $\Delta(G)\geq 5$ and $g(G)\geq 5$, or $\Delta(G)\geq 3$ and $g(G)\geq 6$. Later, Wu, Hou, and Liu~\cite{WuHouLiu} improved the first of these bounds by showing that every planar graph $G$ having $\Delta(G)\geq 7$ and does not contain cycles of length $i$, for some $i\in\{3,4,5\}$, has linear arboricity $\left\lceil\frac{\Delta(G)}{2}\right\rceil$. Finally, Chen, Tan, Wu, and Li~\cite{CTWL} showed that this property is also satisfied by planar graphs $G$ having $\Delta(G)\geq 7$ and no cycles of length 5 having chords, and also by planar graphs $G$ having $\Delta(G)\geq 5$ and no cycles of length 5 and 6 which have chords.
Cygan, Kowalik, and Lu\v{z}ar~\cite{Cygan} improved the first result of Wu by showing that the linear arboricity of all planar graphs $G$ which have $\Delta(G)\geq 9$ is $\left\lceil\frac{\Delta(G)}{2}\right\rceil$. 
They also showed an $O(n\log n)$ algorithm that produces a linear coloring of every planar graph on $n$ vertices with the optimum number of colors when $\Delta(G)\geq 9$. Further, they conjecture that $\chi'_l(G)=\left\lceil\frac{\Delta(G)}{2}\right \rceil$ for every planar graph $G$ having $\Delta(G)\geq 5$. They note that there are planar graphs having maximum degree less than 5 for which the equality given by the conjecture does not hold.

\subsubsection{Degenerate graphs.}
A graph $G$ is said to be \emph{$t$-degenerate} if every subgraph of $G$ contains a vertex of degree at most $t$.
Notice that by definition, a $t$-degenerate graph is also a $t'$-degenerate graph for every $t'\geq t$. We shall be mainly concerned with 2-degenerate and 3-degenerate graphs in this paper. Notice that the graphs in these classes are ``sparse'': It is not difficult to see that the maximum number of edges possible in a $t$-degenerate graph is $tn-{t+1\choose 2}$, for all $n\geq t$, and ${n\choose 2}$ for all $n\leq t$.
The classes of 2-degenerate graphs and 3-degenerate graphs are quite well studied in the literature and they have been shown to be related to many well known graph classes. For example, outerplanar graphs and series-parallel graphs (or the graphs having treewidth at most 2; also called partial 2-trees) are well known examples of subclasses of the class of 2-degenerate graphs. In fact, every graph of treewidth at most $t$ is $t$-degenerate, though there are even 2-degenerate graphs with arbitrarily large treewidth (for example, large grids). The class of 3-degenerate graphs contains interesting graph classes such as triangle-free planar graphs and graphs of treewidth at most 3 (also called partial 3-trees). 

Chen, Hao and Yu~\cite{ChenHaoYu} showed that $\chi'_l(G)=\left\lceil\frac{\Delta(G)}{2}\right \rceil$ for every $k$-degenerate graph $G$ having $\Delta(G)\geq 2k^2-k$, and raise the question of whether the lower bound on $\Delta(G)$ can be reduced to a linear function of $k$, or more specifically, just $2k$. Note that showing that $\chi'_l(G)=\left\lceil\frac{\Delta(G)}{2}\right \rceil$ for a $k$-degenerate graph $G$ becomes harder for smaller values of $\Delta(G)$ --- in fact, as noted before, $\chi'_l(G)=\left\lceil\frac{\Delta(G)+1}{2}\right\rceil$ when $\Delta(G)=k$, or in other words, when $G$ is a $k$-regular graph.

\subsubsection{Our results.}
We study the linear arboricity of 3-degenerate and 2-degenerate graphs with a specific interest in understanding when such graphs have a $\lceil\frac{\Delta}{2}\rceil$-linear coloring. From the result of Chen, Hao, and Yu, we get that every 3-degenerate graph $G$ having $\Delta(G)\geq 15$ and every 2-degenerate graph $G$ having $\Delta(G)\geq 5$ satisfy $\chi'_l(G)=\left\lceil\frac{\Delta(G)}{2}\right\rceil$. We first show that the linear arboricity of every 3-degenerate graph $G$ having $\Delta(G)\geq 9$ is $\left\lceil\frac{\Delta(G)}{2}\right\rceil$. The linear colorings that we construct for such graphs have the additional property that they contain no ``monochromatic vertices'' --- vertices of degree 2 that have two edges of the same color incident on them. Note that this result automatically implies that such linear colorings also exist for every planar graph $G$ having $\Delta(G)\geq 9$ and $g(G)\geq 4$, since such graphs are 3-degenerate. We then show that the arguments used can be adapted to give a new proof for the Linear Arboricity Conjecture for all 3-degenerate graphs. Again, the linear colorings obtained contain no monochromatic vertices except when we construct 2-linear colorings, in which case we get at most one monochromatic vertex per connected component of the graph.
We then give a new proof for the known fact (from~\cite{ChenHaoYu}) that every 2-degenerate graph $G$ has $\chi'_l(G)=\left\lceil\frac{\Delta(G)}{2}\right\rceil$ if $\Delta(G)\geq 5$.
Since the lower bound on $\Delta(G)$ cannot be reduced to 2 (a cycle does not have a 1-linear coloring), the interesting question is whether the lower bound on $\Delta(G)$ can be reduced to 3. We conjecture that $\chi'_l(G)=2$ for every 2-degenerate graph $G$ when $\Delta(G)\in\{3,4\}$. As evidence towards this conjecture, we prove the existence of 2-linear colorings in the following subclasses of 2-degenerate graphs $G$ having $\Delta(G)\leq 4$: (i) those having at least $2|V(G)|-5$ edges, (ii) those that are bipartite, (iii) those having treewidth at most 2. An implication of our result for (i) is that every 2-degenerate graph on $n$ vertices with maximum degree at most 4 and having $2n-3$ edges (the maximum possible number of edges in a 2-degenerate graph) contains a Hamiltonian path. Further, in each of these three cases, the 2-linear colorings that we construct have some special properties: for (i), we get 2-linear colorings having at most one monochromatic vertex; for (ii), we get 2-linear colorings having no monochromatic vertices; and for (iii), we show that given any collection of pairwise disjoint pairs of degree 2 vertices in the graph, there is a 2-linear coloring in which at most one vertex from each pair is a monochromatic vertex.

All our proofs can be converted into linear time algorithms that generate linear colorings of input graphs using a number of colors that matches the upper bounds obtained. These algorithms also have the advantage that except for the case of (i) above, they are simpler to implement and analyze when compared to the algorithm of~\cite{conf}, which required some notions such as ``pseudo-$k$-linear colorings'' just to prove that the algorithm runs in linear time.

We leave open the question as to what is the lowest integer $t$ so that every 3-degenerate graph having $\Delta(G)\geq t$ has a $\left\lceil\frac{\Delta(G)}{2}\right\rceil$-linear coloring.
\section{Notation and preliminaries}
Given a graph $G$ and a set $S\subseteq V(G)$, we denote by $G-S$ the graph obtained by removing the vertices in $S$ from $G$, i.e. $V(G-S)=V(G)\setminus S$ and $E(G-S)=E(G)\setminus\{uv\colon u\in S\}$. When $S\subseteq E(G)$, we abuse notation to let $G-S$ denote the graph obtained by removing the edges in $S$ from $G$; i.e. $V(G-S)=V(G)$ and $E(G-S)=E(G)\setminus S$. In both cases, if $S=\{s\}$, we sometimes denote $G-S$ by just $G-s$. Given a graph $G$ and vertices $u,v\in V(G)$ such that $uv\notin E(G)$ and $N_G(u)\cap N_G(v)=\emptyset$, we let $G/(u,v)$ denote the graph obtained by ``identifying'' the vertex $v$ with $u$. That is, $V(G/(u,v))=V(G)\setminus\{v\}$ and $E(G/(u,v))=E(G-v)\cup\{ux\colon x\in N_G(v)\}$. 

The notation and observations from here till the end of this section also appear in~\cite{conf}.
\medskip

Let $G$ be a $t$-degenerate graph. A \emph{pivot} in $G$ is a vertex that has at most $t$ neighbors of degree more than $t$. A \emph{pivot edge} in $G$ is an edge between a pivot and a vertex with degree at most $t$. 

\begin{observation}\label{obs:pivot}
Every $t$-degenerate graph $G$ either has $\Delta(G)\leq t$ or contains a pivot of degree at least $t+1$.
\end{observation}
\begin{proof}
Suppose that $\Delta(G)>t$. Then the graph $G'=G-\{u\colon d_G(u)\leq t\}$ contains at least one vertex. Since $G'$ is also $t$-degenerate (as every subgraph of a $t$-degenerate graph is also $t$-degenerate), there exists a vertex $v\in V(G')$ such that $d_{G'}(v)\leq t$. It can be seen that $\{u\in N_G(v)\colon d_G(u)>t\}=N_{G'}(v)$. As $|N_{G'}(v)|=d_{G'}(v)\leq t$, we have that $v$ is a pivot in $G$. Since $v\in V(G')$, we know that $d_G(v)>t$, and hence we are done.\hfill\qed
\end{proof}

\noindent\textbf{Pivots and pivot edges.} Notice that every non-empty $t$-degenerate graph $G$ has at least one pivot and at least one pivot edge. This can be seen as follows.
If $\Delta(G)\leq t$, then every vertex of $G$ is a pivot, and every edge of $G$ is a pivot edge. If $\Delta(G)>t$, then by Observation~\ref{obs:pivot}, there exists a pivot $v$ in $G$ such that $d_G(v)>t$. Since a pivot has at most $t$ neighbors of degree more than $t$, there exists $u\in N_G(v)$ having $d_G(u)\leq t$. Then $uv$ is a pivot edge in $G$.
\medskip

Given a $k$-linear coloring of $G/(u,v)$, the vertex $u$ can be ``split back'' into the vertices $u$ and $v$ so as to obtain the graph $G$ together with a $k$-linear coloring of it. The following observation states this fact.
\begin{observation}\label{obs:id}
Let $G$ be a graph and $u,v\in V(G)$ such that $uv\notin E(G)$ and $N_G(u)\cap N_G(v)=\emptyset$.  If $c$ is a $k$-linear coloring of $G/(u,v)$, then $$c'(e)=\left\{\begin{array}{ll}c(e)&\mbox{if }e\mbox{ is not incident to }v\\c(ux)&\mbox{if }e=vx\end{array}\right.$$ is a $k$-linear coloring of $G$.
\end{observation}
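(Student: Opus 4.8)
The plan is to exhibit a natural color-preserving bijection between the edges of $G$ and the edges of $G/(u,v)$, and to argue that passing from $G/(u,v)$ to $G$ amounts to ``splitting'' the vertex $u$ back into $u$ and $v$, an operation that cannot destroy the linear-forest structure of any color class. Concretely, define $\phi\colon E(G)\to E(G/(u,v))$ by $\phi(e)=e$ when $e$ is not incident with $v$ and $\phi(vx)=ux$ otherwise. The hypotheses $uv\notin E(G)$ and $N(u)\cap N(v)=\emptyset$ guarantee that $\phi$ is a well-defined bijection: the former prevents a self-loop $uu$ from arising, and the latter ensures that no edge $ux$ with $x\in N(v)$ coincides with an already-present edge of $G-v$, so the edges coming from $v$ land disjointly from the rest. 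By the definition of $c'$ we have $c'(e)=c(\phi(e))$ for every edge $e$, so $c'$ uses at most $k$ colors, and it remains to verify the two defining properties of a $k$-linear coloring: the per-vertex degree bound and acyclicity of each color class.

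First I would check the degree condition. For a vertex $w\notin\{u,v\}$, the restriction of $\phi$ to the edges incident with $w$ is a bijection onto the edges incident with $w$ in $G/(u,v)$, so the number of color-$i$ edges at $w$ is unchanged and hence at most $2$. For the pair $u,v$, the key point is that the color-$i$ edges incident with $u$ in $G$ and those incident with $v$ in $G$ are sent by $\phi$ to \emph{disjoint} sets of color-$i$ edges incident with $u$ in $G/(u,v)$, where disjointness again comes from $N(u)\cap N(v)=\emptyset$. Consequently the two counts add up to the number of color-$i$ edges at $u$ in $G/(u,v)$, which is at most $2$; in particular each of $u$ and $v$ individually sees at most two edges of any color in $G$.

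The main obstacle is the acyclicity argument, and the crucial gain from the degree computation above is that it also disposes of the one genuinely new phenomenon. Suppose some color class of $c'$ contained a cycle $C$. If $C$ passed through both $u$ and $v$, then that color would appear on at least two edges at $u$ and two edges at $v$ in $G$, contradicting the combined bound that the sum of these counts is at most $2$. Hence $C$ avoids at least one of $u,v$, and I would then push $C$ through $\phi$: if $C$ avoids $v$ it is fixed edgewise and is literally a monochromatic cycle of $G/(u,v)$, whereas if $C$ avoids $u$ then replacing the single vertex $v$ by $u$ throughout turns $C$ into a monochromatic cycle of $G/(u,v)$ on distinct vertices (distinctness holds precisely because $u\notin C$). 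Either way this contradicts the hypothesis that the corresponding color class of $c$ is a linear forest, completing the verification that $c'$ is a $k$-linear coloring of $G$.
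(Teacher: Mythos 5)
Your proof is correct and follows exactly the ``split the identified vertex back'' argument that the paper intends but leaves unproved, treating the statement as an immediate observation; your bijection $\phi$ and the key point that the colour-$i$ degrees of $u$ and $v$ in $G$ sum to the colour-$i$ degree of $u$ in $G/(u,v)$ (which also kills any cycle through both $u$ and $v$) are precisely the details being elided. Nothing further is needed.
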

We shall use the above observation without referring to it explicitly.
\medskip

\noindent\textbf{The sets $\boldsymbol{\Colors(x)}$, $\boldsymbol{\Missing(x)}$, $\boldsymbol{\Twice(x)}$, and $\boldsymbol{\Once(x)}$.}
Let $c$ be a $k$-linear coloring of a graph $G$. For a vertex $x\in V(G)$, let $\Colors(x)$ denote the set of colors in $\{1,2,\ldots,k\}$ that are the color of some edge incident to $x$. Further define $\Missing(x)$ to be the colors in $\{1,2,\ldots,k\}$ that do not appear on any edge incident to $x$, $\Twice(x)$ to be the set of colors that appear on two edges incident to $x$, and $\Once(x)$ to be the set of colors that appear on exactly one edge incident to $x$. Note that for any vertex $x\in V(G)$, $|\Missing(x)|+|\Once(x)|+|\Twice(x)|=k$ and also that the degree of $x$ in $G$ is $|\Once(x)|+2|\Twice(x)|$.
\medskip

\noindent\textbf{Monochromatic vertices.}
Let $G$ be a linearly colored graph. We say that a vertex in $G$ is \emph{monochromatic} if it has degree 2 and both the edges incident to it have the same color.

\section{Linear arboricity of 3-degenerate graphs}
In this section, we first prove that 
every 3-degenerate graph $G$ has a $\left\lceil\frac{\Delta(G)}{2}\right\rceil$-linear coloring containing no monochromatic vertices if $\Delta(G)\geq 9$. We then extend the techniques used to show that every connected 3-degenerate graph $G$ has a $\left\lceil\frac{\Delta(G)+1}{2}\right\rceil$-linear coloring that contains no monochromatic vertices unless $G$ is an odd cycle (in which case it has a 2-linear coloring containing exactly one monochromatic vertex).
\medskip

\subsection{Some preliminary definitions}\label{sec:prelims}
Let $G$ be a 3-degenerate graph having $\Delta(G)\geq 3$.
If $\Delta(G)=3$, then every vertex is a pivot and every edge is a pivot edge; in this case, we denote by $v$ some vertex having $d_G(v)=3$.
On the other hand, if $\Delta(G)>3$, we denote by $v$ a pivot having $d_G(v)\geq 4$, which is guaranteed to exist by Observation~\ref{obs:pivot}.
Let $F$ be the set of pivot edges incident on $v$. It can be seen that $F\neq\emptyset$. Define $X=\{x\in V(G):xv \in F\}$. Clearly, for each $x\in X$, we have $d_G(x)\leq 3$. Let $I=\{xx'\in E(G)\colon x,x'\in X$ and $d_G(x)=d_G(x')=2\}$. Let $H=G-(F\cup I)$. Let $W=\{x \in X\colon d_H(x)=1\}$. It is not difficult to see that $W$ is an independent set in $H$ as well as in $G$. For each vertex $w\in W$, let $\overline{w}$ denote its unique neighbor in $H$. In $H$, we now choose a maximal set of pairs of vertices from $W$ such that the vertices in each pair can be identified with each other without introducing multiple edges.
For this purpose, we define an auxiliary graph $A$ with $V(A)=W$ and $E(A)=\{xy\colon\overline{x}\neq\overline{y}\}$. Let $M$ be a maximal matching of $A$. Let $W'$ denote the vertices of $V(A)=W$ that remain unmatched by $M$. Due to the maximality of $M$, for any two vertices $a,b\in W'$, we have $\overline{a}=\overline{b}$.
Thus if $W'\neq\emptyset$, there exists a vertex $\widetilde{w}\in V(H)\setminus (W\cup\{v\})$ such that $\widetilde{w}=\overline{w}$ for every $w\in W'$.
We can now identify the vertices in $W$ which are matched to each other by $M$ to construct a graph $H'$ from $H$.

It is easy to see that the graph $H'$ is a 3-degenerate graph having $\Delta(H')\leq\Delta(G)$. Note that any linear coloring $c_{H'}$ of $H'$ that contains no monochromatic vertices can be converted into a linear coloring $c_H$ of $H$ containing no monochromatic vertices and using the same number of colors by just splitting back the vertices that were identified during the construction of $H'$ from $H$. We say that $c_H$ is the linear coloring of $H$ ``corresponding to'' the linear coloring $c_{H'}$ of $H'$.

% Clearly, $|E(H')|<|E(G)|$. It then follows from the inductive hypothesis that $H'$ has a $\left\lceil \frac{k}{2}\right\rceil$-linear coloring $c_{H'}$ in which there are no monochromatic vertices. We split the identified vertices keeping intact the coloring of the edges. Let the $\left\lceil \frac{k}{2}\right\rceil$-linear coloring of $H$ so obtained be $c_H$. Clearly, $c_H$ also does not contain any monochromatic vertices.

%That is from the coloring $c_{G'}$ of $G'$, $c_H$ the coloring of $H$ is constructed in the following way.  If $xy\in E(G')$ and neither $x$ nor $y$ was identified with any vertex in $H$ then $c_H(xy)=c_{G'}(xy)$ else if $x$ is identified with $x'$ and the edge $xy$ in $E(G')$ has the natural correspondence  with the edge $x'y$ in $E(H)$ then $c_H(x'y)=c_{G'}(xy)$.   

%\begin{definition}
\medskip

For any set $S\subseteq W$, we say that ``$S$ satisfies property $\mathcal{P}$'' if $|S|\geq 3$ and for each $x\in S$, if there exists $x'\in W$ such that $xx'\in M$, then we also have $x'\in S$.

\begin{claim}\label{clm:prop}
Let $c_{H'}$ be a linear coloring of $H'$ containing no monochromatic vertices, and let $c_H$ be the linear coloring of $H$ corresponding to $c_{H'}$. Let $S\subseteq W$. If $S$ satisfies property $\mathcal{P}$, then there exist $x,y \in S$ such that the colors of the edges $x\overline{x}$, $y\overline{y}$ are different in $c_H$.
\end{claim}
\textit{Proof of claim.} Suppose $S$ satisfies property $\mathcal{P}$. Assume for the sake of contradiction that for each $x\in S$, $c_H(x\overline{x})=1$ (say). Suppose that there exist $u,w\in S$ such that $uw\in M$. Then the edges $u\overline{u}$ and $w\overline{w}$ got their colors in $c_H$ from the colors of two edges incident on a degree 2 vertex in $c_{H'}$. Then this vertex has two edges of color 1 incident on it in $c_{H'}$, which contradicts the fact that there are no monochromatic vertices in $c_{H'}$. As $S$ satisfies property $\mathcal{P}$, we can now conclude that $S\subseteq W'$. As $|S|\geq 3$, we have three distinct vertices $p,q,r\in S$ such that $c_H(p\overline{p})=c_H(q\overline{q})=c_H(r\overline{r})=1$.
Since $S\subseteq W'$, we have that $\overline{p}=\overline{q}=\overline{r}=\widetilde{w}$. 
Thus $c_H(p\widetilde{w})=c_H(q\widetilde{w})=c_H(r\widetilde{w})=1$. Then three edges incident on the vertex $\widetilde{w}$ have the same color in $c_H$, contradicting the fact that $c_H$ is a linear coloring of $H$. This proves the claim.
\bigskip

\subsection{Maximum degree at least 9}

\begin{theorem}\label{thm:3-degrev}
Every 3-degenerate graph $G$ having maximum degree $\Delta(G)\leq k$, where $k\geq 9$, has a $\left\lceil \frac{k}{2} \right\rceil$-linear coloring in which there are no monochromatic vertices.
\end{theorem}
\begin{proof}
We prove this by induction on $|E(G)|$. 
If $\Delta(G)\leq 2$, then $G$ is a disjoint union of cycles and paths; in this case, $G$ clearly has a 3-linear coloring in which there is no monochromatic vertex, and we are done. So we can assume that $\Delta(G)\geq 3$. 

Let the sets $F$, $I$, $X$, $W$, and the graphs $H$, $H'$ be as defined in Section~\ref{sec:prelims}.
Since $|E(H')|<E(G)|$, and $\Delta(H')\leq\Delta(G)\leq k$, we have by the induction hypothesis that there is a $\left\lceil\frac{k}{2}\right\rceil$-linear coloring $c_{H'}$ of $H'$ in which there are no monochromatic vertices. Let $c_H$ be the linear coloring of $H$ corresponding to $c_{H'}$.
We show how to construct a $\left\lceil\frac{k}{2}\right\rceil$-linear coloring of $G$ that does not contain any monochromatic vertices, starting from the coloring $c_H$. We first construct a $\left\lceil\frac{k}{2}\right\rceil$-linear coloring $c$ of $G-I$ that does not contain any monochromatic vertices by extending $c_H$. Once this is done, the linear coloring $c$ can be easily further extended to the required $\left\lceil\frac{k}{2}\right\rceil$-linear coloring of $G$ by assigning to each edge $xx'\in I$ a color that is different from both $c(vx)$ and $c(vx')$ (this can be done since $\left\lceil\frac{k}{2}\right\rceil\geq 5$). We describe below how the $\left\lceil\frac{k}{2}\right\rceil$-linear coloring $c$ of $G-I$ is constructed. Note that $W$ is exactly the set of vertices in $X$ that have degree 2 in $G-I$. In the following, we denote by $d(u)$ the degree of a vertex $u$ in the graph $G-I$.

Let $\mathcal{C}$ denote the set of edge colorings (that are not necessarily linear colorings) of $G-I$ using colors in $\{1,2,\ldots,\left\lceil\frac{k}{2}\right\rceil\}$ that can be obtained from $c_H$ by coloring the edges in $F$ using the colors from $\Missing(v)\cup\Once(v)$ such that every color in $\Missing(v)$ is given to at most two edges of $F$ and every color in $\Once(v)$ is given to at most one edge of $F$ (here, the sets $\Missing(v)$ and $\Once(v)$ are with respect to the coloring $c_H$). In other words, $\mathcal{C}$ is the set of edge colorings of $G-I$ that are extensions of $c_H$ in which at most two edges of the same color are incident on $v$. Notice that $\mathcal{C}\neq\emptyset$, since we can always generate a coloring that belongs to $\mathcal{C}$ by the following procedure: each color in $\Missing(v)$ is assigned to some two edges of $F$, and each color in $\Once(v)$ is assigned to some edge in $F$, so that every edge in $F$ gets colored. This can be done because $d(v)\leq\Delta(G)\leq k$.
We make the following observation.
\medskip

{\itshape
Since $c_H$ did not contain any monochromatic vertex, in every coloring in $\mathcal{C}$, no vertex has more than two edges of the same color incident on it. 
}
\medskip

Thus, for any coloring in $\mathcal{C}$, the subgraph formed by the edges having the same color is a disjoint union of cycles and paths. Note that given any coloring in $\mathcal{C}$, permuting the colors on the edges in $F$ always gives another coloring in $\mathcal{C}$.

Among the colorings in $\mathcal{C}$, let $c$ denote a coloring that contains the smallest possible number of monochromatic vertices, and subject to that, contains the smallest possible number of monochromatic cycles. We claim that $c$ is a linear coloring of $G-I$ that does not contain any monochromatic vertices. From here onward, the sets $\Missing(v)$ and $\Once(v)$ shall be with respect to the coloring $c$.
 
Suppose that there exists a monochromatic vertex $u$ in $c$. Since there are no monochromatic vertices in $c_H$ and $d(v)\geq 3$, we can conclude that $u\in W$. Suppose first that there exists a color $i\in \Missing(v)\cup (\Once(v)\setminus\{c(uv)\})$. Then we can change the color of $uv$ to $i$ so that $u$ is no longer a monochromatic vertex. It is easy to see that we have not introduced any new monochromatic vertex, and hence we have a coloring in $\mathcal{C}$ that has fewer monochromatic vertices than $c$, which contradicts our choice of $c$. So we can assume that $\Missing(v)=\emptyset$ and that $\Once(v)\subseteq\{c(uv)\}$, which implies that $|\Missing(v)\cup\Once(v)|\leq 1$. This means that $d(v)\geq k-1$ (note that this immediately implies that $d(v)\geq 8$). As $v$ is a pivot, it follows that $|F|=|X|\geq k-4$.

Next, suppose that there exists $x\in X\setminus W$. Then $d(x)\in\{1,3\}$. If $c(uv)\neq c(xv)$, then we can interchange the colors of $uv$ and $xv$ so as to obtain a coloring in $\mathcal{C}$ having a smaller number of monochromatic vertices than $c$, which contradicts our choice of $c$. So we can assume that for every $y\in X\setminus W$, $c(uv)=c(yv)$. Since at most one edge in $F$ other than $uv$ can have the color $c(uv)$, we can now assume that $X\setminus W=\{x\}$, and also that $c(xv)=c(uv)$. Note that this implies that for every $y\in W\setminus\{u\}$, $c(yv)\neq c(uv)$.
%If there exists a vertex $w\in W$ such that $u$ was identified with $w$ during the construction of $H'$ from $H$ (i.e. $uw\in M$), then we have $c_H(u\overline{u})\neq c_H(w\overline{w})$, and hence $c(u\overline{u})\neq c(w\overline{w})$. In this case, we can interchange the colors of $uv$ and $wv$ to obtain a new coloring in $\mathcal{C}$ in which $u$ is no longer a monochromatic vertex and no new monochromatic vertex is introduced. This contradicts our choice of $c$. So we assume that $u$ was not identified with any other vertex during the construction of $H'$, i.e. $u\in W'$. Let $S=W\setminus \{u\}$.
Since $|X|\geq k-4$, we now have $|W|\geq k-5$. Let $S=W$. As $k\geq 9$, we then have $|S|\geq 4$.
Clearly, $S$ satisfies property $\mathcal{P}$. By Claim~\ref{clm:prop}, we know that there exists $y\in S$ such that $c(y\overline{y})\neq c(u\overline{u})=c(uv)$ (note that $y\neq u$). We can then interchange the colors of $yv$ and $uv$ to obtain a coloring in $\mathcal{C}$ that has fewer monochromatic vertices than $c$, which contradicts the choice of $c$. So we can assume that $X\setminus W=\emptyset$, or in other words $X=W$. If there exists $w\in W\setminus\{u\}$ such that $c(vw)=c(uv)$, then let $S=W\setminus(\{w\}\cup\{w'\in W\colon ww'\in M\})$; otherwise, let $S=W$.
% Let $T=\{x\in X\setminus\{u\}\colon c(xv)=c(uv)\}$ (clearly, $|T|\leq 1$). Let $T'=\{x'\in X\colon\exists x\in T$ such that $xx'\in M\}$ (again, $|T'|\leq 1$). Define $S=X\setminus (T\cup T')$.
It is easy to see that $|S|\geq |W|-2=|X|-2\geq k-6\geq 3$. Thus $S$ satisfies property $\mathcal{P}$. By Claim~\ref{clm:prop}, it follows that there exists $y\in S$ such that $c(y\overline{y})\neq c(u\overline{u})=c(uv)$. Now, by interchanging the colors of the edges $uv$ and $yv$, we can obtain a coloring in $\mathcal{C}$ that has fewer monochromatic vertices than $c$, which contradicts the choice of $c$. We can thus conclude that there are no monochromatic vertices in $c$.

Next let us suppose that $c$ contains a monochromatic cycle. Let each edge of this cycle have color~1~(say). As there are no monochromatic cycles in $c_H$, we can infer that an edge $uv\in F$ is contained in this monochromatic cycle. Let $u'v$ be the other edge incident to $v$ that is contained in the monochromatic cycle. Clearly, $c(uv)=c(u'v)=1$. As there are no monochromatic vertices in $c$, we have $d(u)\geq 3$ and $d(u')\geq 3$, which implies that $u,u'\notin W$. As $uv\in F$, we further have that $d(u)=3$. Let $2$ be the color of the edge incident on $u$ that is not part of the monochromatic cycle. We denote by $P$ the monochromatic path from $v$ to $u$ containing only edges colored 1, and not containing the edge $uv$; i.e. it is the path obtained by removing the edge $uv$ from the monochromatic cycle.
Since there is only one edge colored 2 incident on $u$, we know that there is a maximal monochromatic path, all of whose edges are colored 2, having one endpoint $u$. Let us denote this path by $Q$.

Suppose that there exists $i\in\Missing(v)\cup(\Once(v)\setminus\{2\})$. Then we can change the color of $uv$ to $i$ so as to obtain a coloring in $\mathcal{C}$ that does not contain any monochromatic vertices and contains fewer monochromatic cycles than $c$. As this is a contradiction to the choice of $c$, we can assume that $\Missing(v)=\emptyset$ and $\Once(v)\subseteq\{2\}$.
This implies that $d(v)\geq k-|\Once(v)|$. As before, since $v$ is a pivot, we have $|F|=|X|\geq k-|\Once(v)|-3$. Note that since $|\Once(v)|\leq 1$, this means that $|F|=|X|\geq k-4$.
%This implies that $|\Missing(v)\cup\Once(v)|\leq 1$, which further implies that $d(v)\geq k-1$. As before, since $v$ is a pivot, we have $|F|=|X|\geq k-4$.

\begin{claim}\label{clm:exch}
Let $xv\in F$ such that $x\notin\{u,u'\}$. Let $c'$ be the coloring in $\mathcal{C}$ obtained from $c$ by exchanging the colors of the edges $uv$ and $xv$. If $c'$ does not have fewer monochromatic cycles than $c$, then $c(xv)=c'(uv)=2$ and $c'$ contains a monochromatic cycle colored 2 containing the edge $uv$.
\end{claim}
\textit{Proof of claim.}
It is clear that the monochromatic cycle colored 1 in $c$ is no longer a monochromatic cycle in $c'$, since $c'(uv)=c(xv)\neq 1$. If there is a new monochromatic cycle in $c'$, then clearly, it has to contain either the edge $xv$ or the edge $uv$. In the former case, i.e. there is a monochromatic cycle colored 1 in $c'$ containing the edge $xv$, since $P$ is a path colored 1 from $v$ to $u$ in $c'$ as well, we have that $u$ also belongs to this cycle. But this contradicts the fact that there is only one edge colored 1 incident on $u$ in $c'$. Thus, if at all a new monochromatic cycle arises in $c'$, it has to be one containing the edge $uv$. Since the only color other than 1 that appeared on the edges incident on $u$ in $c$ was 2, it follows that this new monochromatic cycle is colored 2, which implies that $c'(uv)=c(xv)=2$. This proves the claim.
\medskip

First, suppose that $v$ is not contained in $Q$. If there exists $x\in X\setminus (W\cup\{u,u'\})$, then we exchange the colors of the pivot edges $xv$ and $uv$ to obtain a new coloring $c'$ in $\mathcal{C}$. Clearly, the coloring $c'$ does not contain any monochromatic vertices. By Claim~\ref{clm:exch} and our choice of $c$, we have that $c'$ contains a monochromatic cycle colored 2 containing the edge $uv$, which we shall denote by $C$. Then $C-uv$ is a path in $c'$, all of whose edges are colored 2, from $u$ to $v$. The edge $xv$ is not on this path since $c'(xv)=1$, and therefore $C-uv$ is a path colored 2 in $c$ too, implying that $v$ is contained in the path $Q$ in $c$, contradicting our assumption that $v$ is not contained in $Q$. So we can assume that $X\setminus W\subseteq\{u,u'\}$. As $|X|\geq k-4$, we now have $|W|\geq k-6\geq 3$. Let $S=W$. It is easy to see that $S$ satisfies property $\mathcal{P}$, and therefore by Claim~\ref{clm:prop}, we have that there exists $y\in S$ such that $c(y\overline{y})\neq 1$. We now exchange the colors of the edges $uv$ and $yv$ to obtain a new coloring $c'$ in $\mathcal{C}$. By our choice of $y$, it follows that there are no monochromatic vertices in $c'$. Then by our choice of $c$, we have that $c'$ does not have fewer monochromatic cycles than $c$, which implies by Claim~\ref{clm:exch} that there is a monochromatic cycle colored 2 containing the edge $uv$ in $c'$. As before, this implies that there is a monochromatic path colored 2 between $u$ and $v$ in $c$, which contradicts our assumption that $v$ does not lie on $Q$.

So we can assume that $v$ is contained in $Q$.
Let $zv$ be the first edge on $Q$ that is incident on $v$ (when traversing the path $Q$ starting from $u$), and let $Q_z$ denote the subpath of $Q$ between $u$ and $z$. Since there are no monochromatic vertices in $c$, we know that $d(z)\geq 3$.
Suppose that $zv\in F$, i.e. $z\in X$. Then we exchange the colors of $zv$ and $uv$ to obtain a new coloring $c'$ in $\mathcal{C}$. It is easy to see that this does not create any monochromatic vertices. By our choice of $c$, we then have that $c'$ does not contain fewer monochromatic cycles than $c$. Then by Claim~\ref{clm:exch}, we know that $c'$ has a monochromatic cycle colored 2 containing the edge $uv$, which we shall denote by $C$. Note that even in the coloring $c'$, the path $Q_z$ is a monochromatic path colored 2 between $u$ and $z$. Thus, the fact that $u$ is contained in $C$ implies that $z$ is contained in $C$, which contradicts the fact that exactly one edge colored 2 is incident on $z$ in $c'$. We can thus conclude $zv\notin F$, or in other words, $z\notin X$.

%1) $|W|\geq 3$ --> $\exists y\in W$ s.t. $c(y\overline{y})\neq 1$ --> if $c(yv)\neq 2$, done --> $c(yv)=2$ --> $\nexists x\in X\setminus W$ s.t. $c(xv)=2$ --> $|X\setminus W|\leq 2$ --> $|W|\geq 4$ --> there exists a pivot edge inside W with which $yv$'s color can be exchanged.

If there exists $x\in X\setminus W$ such that $c(xv)\notin\{1,2\}$, then we can exchange the colors of the edges $xv$ and $uv$ to obtain a new coloring $c'$ in $\mathcal{C}$. Clearly, $c'$ does not contain any monochromatic vertices, and therefore by our choice of $c$, it should contain at least as many monochromatic cycles as $c$. Then by Claim~\ref{clm:exch}, we have that $c(xv)=2$, which contradicts the fact that $c(xv)\notin\{1,2\}$. We can thus assume that for every $x\in X\setminus W$, $c(xv)\in\{1,2\}$. Since there can be at most two edges of each color incident on $v$, and $zv\notin F$ is an edge colored 2 incident on $v$, we have that $|X\setminus W|\leq 3-|\Once(v)|$. Thus, $|W|\geq |X|-3+|\Once(v)|$. Recalling that $|X|\geq k-|\Once(v)|-3$, we now have that $|W|\geq k-6\geq 3$. Note that for every $y\in W$, we have $c(yv)\neq 1$, since $u,u'\notin W$. Suppose that there exists $y\in W$ such that $c(y\overline{y})\neq 1$ and $c(yv)\neq 2$, then we exchange the colors of $yv$ and $uv$ to obtain a new coloring $c'$ in $\mathcal{C}$. Clearly, $c'$ does not contain any monochromatic vertices, and therefore by the choice of $c$ and Claim~\ref{clm:exch}, it follows that $c'(uv)=2$, which contradicts the fact that $c(yv)\neq 2$. So we can assume that for every $y\in W$ such that $c(yv)\neq 2$, we have $c(y\overline{y})=1$.

Let $S=W$. Since $S$ satisfies property $\mathcal{P}$, we have by Claim~\ref{clm:prop} that there exists $y\in S$ such that $c(y\overline{y})\neq 1$. Then by the above observation, we have that $c(yv)=2$.
Since $zv$ and $yv$ are two edges colored 2 incident on $v$, we now have that for every $p\in X\setminus\{y\}$, $c(pv)\neq 2$, and also that $|\Once(v)|=0$. Then using our previous observation that for every $x\in X\setminus W$, $c(xv)\in\{1,2\}$, we can conclude that for every $x\in X\setminus W$, $c(xv)=1$. This implies that $|X\setminus W|\leq 2$. This gives $|W|\geq |X|-2\geq k-|\Once(v)|-5=k-5\geq 4$. Thus there exists $w\in W\setminus\{y\}$ such that $c(wv)\neq c(y\overline{y})$. Notice that $c(wv)\neq 2$, which implies by our observation from the previous paragraph that $c(w\overline{w})=1$. We now construct a new coloring $c'$ in $\mathcal{C}$ by setting $c'(uv)=c(wv)$, $c'(yv)=c(uv)=1$, $c'(wv)=c(yv)=2$, and by giving every other edge the same color as it has in $c$. Then $c'$ is a coloring in $\mathcal{C}$ containing no monochromatic vertices. Since $c(wv)\notin\{1,2\}$, we have that there is no monochromatic cycle containing $uv$ in $c'$. Since $y$ and $w$ are not monochromatic vertices in $c'$, it is clear that neither $yv$ nor $wv$ are contained in monochromatic cycles in $c'$. This implies that $c'$ contains fewer monochromatic cycles than $c$, which contradicts our choice of $c$.
\hfill\qed
\end{proof}
\begin{corollary}
Every 3-degenerate graph $G$ having $\Delta(G)\geq 9$ has a $\left\lceil\frac{\Delta(G)}{2}\right\rceil$-linear coloring.
\end{corollary}

\subsection{Maximum degree less than 9}

\begin{theorem}\label{thm:3dg7}
Every 3-degenerate graph $G$ having $\Delta(G)\leq 7$ has a 4-linear coloring containing no monochromatic vertices.
\end{theorem}
\begin{proof}
We prove this by induction on $|E(G)|$. Clearly, if $\Delta(G)\leq 2$, then $G$ has a 4-linear coloring containing no monochromatic vertices. So we assume that $\Delta(G)\geq 3$.

Let the sets $F$, $I$, $X$, $W$, and the graphs $H$, $H'$ be as defined in Section~\ref{sec:prelims}.
As $|E(H')|<|E(H)|$, by the induction hypothesis, there is a 4-linear coloring $c_{H'}$ of $H'$ that contains no monochromatic vertices. Let $c_H$ be the 4-linear coloring of $H$ containing no monochromatic vertices corresponding to $c_{H'}$. We assume that the colors used in $c_{H'}$ and $c_{H}$ are from $\{1,2,3,4\}$.
As in the earlier proof, let $\mathcal{C}$ denote the edge colorings using colors $\{1,2,3,4\}$ of $G-I$ obtained by extending $c_H$ by coloring the edges in $F$ in such a way that no color occurs more than twice on the edges incident on $v$.
Again, it is easy to see that any coloring in $\mathcal{C}$ has the property that each vertex has at most two edges of the same color incident on it. Let $c$ be a coloring in $\mathcal{C}$ containing the least number of monochromatic vertices, and subject to that, the least number of monochromatic cycles.
In the following, the sets $\Missing(v)$, $\Once(v)$, and $\Twice(v)$ are with respect to the coloring $c$, and we denote by $d(u)$ the degree of a vertex $u$ in $G-I$.

\begin{claim}\label{clm:nomv7}
There are no monochromatic vertices in $c$.
\end{claim} 
\textit{Proof of claim.}
For the sake of contradiction, let us assume that $u$ is a monochromatic vertex in $c$. Clearly, $u\in W$.
If there exists any color $i\in (\Missing(v)\cup\Once(v))\setminus\{c(uv)\}$, we can recolor $uv$ with $i$ to obtain a coloring in $\mathcal{C}$ having lesser number of monochromatic vertices than $c$, contradicting our choice of $c$. Therefore, we assume that $\Missing(v)=\emptyset$ and $\Once(v)\subseteq\{c(uv)\}$. This means that $|\Missing(v)\cup \Once(v)|\leq 1$, which implies that $d(v)\geq 7$. Since $\Delta(G)\leq 7$, we have $d(v)=7$ and therefore $\Once(v)=\{c(uv)\}$, and also that $|X|\geq 4$. If there is a vertex $w\in X \setminus W$, then we can exchange the colors of $wv$ and $uv$ to obtain a coloring in $\mathcal{C}$ that contains fewer monochromatic vertices than $c$, which again contradicts our choice of $c$. So we assume that $X=W$, which implies that $|W|\geq 4$. The set $W$ satisfies property $\mathcal{P}$. Then by Claim~\ref{clm:prop}, there exists $z\in W$ such that $c(z\bar{z})\neq c(u\bar{u})$. Now exchanging the colors on the edges $uv$ and $zv$ gives a coloring in $\mathcal{C}$ with fewer monochromatic vertices than $c$, again leading to the same contradiction (notice that $c(zv)\neq c(uv)$ since $\Once(v)=\{c(uv)\}$).
This proves the claim.

\begin{claim}\label{clm:nomc7}
There are no monochromatic cycles in $c$.
\end{claim}
\textit{Proof of claim.}
Let $C$ be a monochromatic cycle in $c$. Then as $c_H$ did not contain any monochromatic cycles, there exists $wv \in E(C)\cap F$. Using Claim~\ref{clm:nomv7}, we infer that $d(w)=3$, which means that $w\in X\setminus W$. Let us denote by 1 the color of the edges in $C$, and by 2 the color of the edge incident on $w$ that is not in $E(C)$. Let $P$ be the maximal path formed by edges of color 2 starting from $w$. Let $x$ denote the end vertex of $P$ other than $w$. Notice that since $d(v)\leq 7$, there exists $i\in\Once(v)\cup\Missing(v)$. Clearly, $i\neq 1$. If $x\neq v$, then we can recolor $wv$ with the color $i$ to obtain a coloring in $\mathcal{C}$ with no monochromatic vertices and fewer monochromatic cycles than $c$, which contradicts our choice of $c$. So we assume that $x=v$. If there is any $i\in (\Once(v)\cup\Missing(v))\setminus\{2\}$, then we can recolor $wv$ with $i$ to again obtain a coloring in $\mathcal{C}$ that contains no monochromatic vertices and fewer monochromatic cycles than $c$, which contradicts our choice of $c$. So we infer that $d(v)=7$ and $\Once(v)=\{2\}$. If there exists $u\in (X\setminus\{w\})\cap V(C)$, then recoloring $uv$ with 2 gives a coloring in $\mathcal{C}$ that contradicts the choice of $c$. Therefore, we can assume that $(X\setminus\{w\})\cap V(C)=\emptyset$. Then if there is any $u\in X\setminus\{w\}$, then $c(uv)\neq 1$, and exchanging the colors on the edges $wv$ and $uv$ gives a coloring in $\mathcal{C}$ that contradicts our choice of $c$. Hence we can assume that $X\setminus W=\{w\}$ and therefore $|W|\geq 3$. Then $W$ satisfies property $\mathcal{P}$, and by Claim~\ref{clm:prop}, there exists $u\in W$ such that $c(u\bar{u})\neq 1$. Now we can exchange the colors on the edges $uv$ and $wv$ to again get a coloring in $\mathcal{C}$ with no monochromatic vertices and fewer monochromatic cycles than $c$, which contradicts our choice of $c$. This proves the claim.
\medskip

From Claims~\ref{clm:nomv7} and~\ref{clm:nomc7}, it follows that $c$ is a 4-linear coloring of $G-I$ containing no monochromatic vertices. We can now extend $c$ to a 4-linear coloring of $G$ that does not contain any monochromatic vertices by coloring each edge $xx'\in I$ using a color that is different from $c(vx)$ and $c(vx')$.
This completes the proof.
\hfill\qed
\end{proof}

\begin{theorem}\label{thm:3dg5}
Every 3-degenerate graph $G$ having $\Delta(G)\leq 5$ has 3-linear coloring containing no monochromatic vertices.
\end{theorem}
\begin{proof}
As before, we prove this by induction on $|E(G)|$. Since $G$ has a 3-linear coloring containing no monochromatic vertices if $\Delta(G)\leq 2$, we assume that $\Delta(G)\geq 3$.

Let the sets $F$, $I$, $X$, $W$, and the graphs $H$, $H'$ be as defined in Section~\ref{sec:prelims}.
As $|E(H')|<|E(H)|$, by the induction hypothesis, there is a 3-linear coloring $c_{H'}$ of $H'$ that contains no monochromatic vertices. We now construct a new 3-linear coloring $c'_{H'}$ of $H'$, also containing no monochromatic vertices, by modifying $c_{H'}$ a little if required. Let $W=\{u,w\}$ and $\bar{u}=\bar{w}=x$ (say). Suppose that $c_{H'}(ux)=c_{H'}(wx)$. Since $x$ has degree at most 5 in $H'$, there is a color $i$ of $c_{H'}$ that does not occur twice on the edges incident on $x$. We now recolor one of the edges $ux$ or $wx$ in $c_{H'}$ with the color $i$ to obtain a new edge coloring $c'_{H'}$ of $H'$. It is easy to see that $c'_{H'}$ is also a 3-linear coloring of $H'$ having no monochromatic vertices. If $|W|\neq 2$ or if $W=\{u,w\}$, but $\bar{u}\neq\bar{w}$ or $c_{H'}(u\bar{u})\neq c_{H'}(w\bar{w})$, then we let $c'_{H'}=c_{H'}$. Thus, the coloring $c'_{H'}$ of $H'$ has the special property that if $W=\{u,w\}$ and $\bar{u}=\bar{w}$, then $c'_{H'}(u\bar{u})\neq c'_{H'}(w\bar{w})$.

Let $c_H$ be the 3-linear coloring of $H$ containing no monochromatic vertices corresponding to $c'_{H'}$. We assume that the colors used by $c'_{H'}$ and $c_H$ are from $\{1,2,3\}$.
We again let $\mathcal{C}$ denote the edge colorings using colors in $\{1,2,3\}$ of $G-I$ obtained by extending $c_H$ by coloring the edges in $F$ in such a way that no color occurs more than twice on the edges incident on $v$.
As before, every coloring in $\mathcal{C}$ has the property that each vertex has at most two edges of the same color incident on it. 
Let $c$ be a coloring in $\mathcal{C}$ containing the least number of monochromatic vertices, and subject to that, the least number of monochromatic cycles.
In the following, the sets $\Missing(v)$, $\Once(v)$, and $\Twice(v)$ are with respect to the coloring $c$, and we denote by $d(u)$ the degree of a vertex $u$ in $G-I$.

\begin{claim}\label{clm:nomv5}
There are no monochromatic vertices in $c$.
\end{claim}
\textit{Proof of claim.}
Let $u$ be a monochromatic vertex in $c$. Let $i\in (\Once(v)\cup\Missing(v))\setminus\{c(uv)\}$. Then we can recolor the edge $uv$ with the color $i$ to obtain a coloring in $\mathcal{C}$ that contains fewer monochromatic vertices than $c$, which contradicts our choice of $c$. So $\Missing(v)=\emptyset$ and $\Once(v)\subseteq\{c(uv)\}$. This implies that $d(v)\geq 5$. Since $\Delta(G)\leq 5$, we have $d(v)=5$, which implies that $\Once(v)=\{c(uv)\}$ and $|X|\geq 2$. If $w\in X\setminus W$, then we can exchange the colors of $uv$ and $wv$ to get a coloring in $\mathcal{C}$ with fewer monochromatic vertices, again contradicting our choice of $c$. So $|W|=|X|\geq 2$. If $c(uv)=c(u\bar{u})\neq c(w\bar{w})$ for some $w\in W$, then we can exchange the colors on the edges $uv$ and $wv$ to obtain a coloring in $\mathcal{C}$ which will lead to the usual contradiction to our choice of $c$. Hence it must be the case that $c(u\bar{u})=c(w\bar{w})$ for all $w\in W$. From Claim~\ref{clm:prop}, it follows that $|W|=2$. Let $W=\{u,w\}$. Since $c(u\bar{u})=c(w\bar{w})$, and the fact that there are no monochromatic vertices in $c'_{H'}$, we get that $\bar{u}=\bar{w}=x$ (say). Note that we have $c'_{H'}(u\bar{u})=c(u\bar{u})=c(w\bar{w})=c'_{H'}(w\bar{w})$. We now have a contradiction to the special property of $c'_{H'}$ that was observed above.
This proves the claim.
\medskip

\begin{claim}\label{clm:nomc5}
There are no monochromatic cycles in $c$.
\end{claim}
\textit{Proof of claim.}
Suppose that there is a monochromatic cycle $C$ in the coloring $c$ of $G-I$. Since there are no monochromatic cycles in $c_H$, we have that there exists $uv\in E(C)\cap F$. Let 1 denote the color of the edges of $C$. From Claim~\ref{clm:nomv5} it must be that $d(u)=3$. Let 2 denote the color of the edge incident on $u$ that does not belong to $C$. Let $P$ be the maximal path whose edges are colored 2 that starts at $u$, and let $x$ denote its end vertex other than $u$. Suppose first that $x\neq v$. Clearly, since $d(v)\leq 5$, there exists $i\in\Once(v)\cup\Missing(v)$. We can recolor the edge $uv$ with the color $i$ to obtain a coloring in $\mathcal{C}$ having no monochromatic vertices and having fewer monochromatic cycles than $c$, which contradicts our choice of $c$. So let us assume that $x=v$.
Now if there exists $i\in (\Once(v)\cup\Missing(v))\setminus\{2\}$, then we can recolor the edge $uv$ with $i$ to obtain a coloring in $\mathcal{C}$, which will again lead to the same contradiction to the choice of $c$. So we can assume that $\Missing(v)=\emptyset$ and $\Once(v)=\{2\}$. Note that this implies that $d(v)=5$.
Suppose there exists $w\in W$. Notice that since there are no monochromatic vertices in $c$, we have $c(wv)\notin\{1,2\}$. If $c(w\bar{w})\neq 1$, then we can exchange the colors on the edges $uv$ and $wv$ to obtain a coloring in $\mathcal{C}$ which will again lead to the same contradiction. On the other hand, if $c(w\bar{w})=1$, then we can color $uv$ with $c(wv)$ and $wv$ with 2 to get another coloring in $\mathcal{C}$ which will also lead to the same contradiction. So we can assume that $W=\emptyset$.
Since $d(v)=5$, this implies that $|X\setminus W|\geq 2$.
Thus there exists $w\in (X\setminus W)\setminus\{u\}$.
If $wv\in E(C)$, we can recolor the edge $wv$ with 2 to get a coloring in $\mathcal{C}$ which again will contradict our choice of $c$ as before. So we can assume that $wv\notin E(C)$, which means that $c(wv)\neq 1$. Then we can exchange the colors of the edges $uv$ and $wv$ to again get a coloring in $\mathcal{C}$ with no monochromatic vertices and fewer monochromatic cycles, contradicting our choice of $c$. This proves the claim.
\medskip

From Claims~\ref{clm:nomv5} and~\ref{clm:nomc5}, it follows that there is a 3-linear coloring of $G-I$ containing no monochromatic vertices. Now as before, we can color every edge $xx'\in I$ with a color not in $\{c(vx),c(vx')\}$ to obtain a 3-linear coloring of $G$ containing no monochromatic vertices.
\hfill\qed
\end{proof}

\begin{theorem}\label{thm:3dg3}
Every connected graph $G$ having $\Delta(G)\leq 3$ is either an odd cycle or has a 2-linear coloring without any monochromatic vertices.
\end{theorem}

\begin{proof}
We prove this by induction on $|E(G)|$.
Let $G$ be a connected graph of maximum degree 3.
Observe that if $G$ is an odd cycle, then for any vertex $u\in V(G)$, there is a 2-linear coloring of $G$ in which the only monochromatic vertex is $u$. If $G$ is an even cycle or a path, then any proper edge coloring of $G$ using 2 colors is a 2-linear coloring of $G$ having no monochromatic vertex.
Suppose that $G$ contains a vertex $u$ such that $d_G(u)=1$. Let $N_G(u)=\{v\}$. Let $H=G-u$. Clearly, $H$ is a connected graph. Notice that $|E(H)|<|E(G)|$. If $H$ is an odd cycle, then we have by the induction hypothesis that $H$ has a 2-linear coloring $c$ in which $v$ is the only monochromatic vertex. Otherwise, we have by the induction hypothesis that $H$ has a 2-linear coloring $c$ having no monochromatic vertices. In either case, we extend $c$ to a 2-linear coloring of $G$ containing no monochromatic vertices by coloring the edge $uv$ with a color in $\Once(v)\cup\Missing(v)$.

So we assume that $G$ contains no vertex of degree 1. If there is no vertex of degree more than 2, then $G$ is a cycle, in which case we are already done as noted above. So there exists $v\in V(G)$ such that $d_G(v)=3$. Let $u\in N_G(v)$. Let $H=G-uv$.

First, suppose that $H$ is disconnected. Clearly, $H$ has two connected components, say $C_u$ and $C_v$, containing $u$ and $v$ respectively. By the induction hypothesis, we can assume that $C_u$ (resp. $C_v$) has a 2-linear coloring $c_u$ (resp. $c_v$) using the colors $\{1,2\}$, such that if $C_u$ (resp. $C_v$) is an odd cycle, then the only monochromatic vertex in $c_u$ (resp. $c_v$) is $u$ (resp. $v$) and both edges incident on $u$ (resp. $v$) are colored 1 in $c_u$ (resp. $c_v$); otherwise, $c_u$ (resp. $c_v$) contains no monochromatic vertices. Moreover, we assume that if $u$ is a degree 1 vertex in $C_u$, then the only edge incident on $u$ is colored 1 in $c_u$. We construct a 2-linear coloring of $G$ as follows. First, color the edges of $C_u$ with the colors they have in $c_u$ and the edges of $C_v$ with the colors they have in $c_v$. Now, we color $uv$ with 2 to obtain the required 2-linear coloring of $G$.

Next, suppose that $H$ is connected. If $H$ is an odd cycle, then by the induction hypothesis, there is a 2-linear coloring $c$ of $H$ in which $v$ is the only monochromatic vertex. Now coloring $uv$ with the color in $\Missing(v)$ gives a 2-linear coloring of $G$ with no monochromatic vertices. On the other hand, if $H$ is not an odd cycle, then by the induction hypothesis, there exists a 2-linear coloring $c$ of $H$ using the colors $\{1,2\}$ containing no monochromatic vertices. If there exists $i\in\Missing(u)\cup\Missing(v)$, then we can color $uv$ with $i$ to obtain a 2-linear coloring of $H$ containing no monochromatic vertices. So we assume that $\Missing(u)=\Missing(v)=\emptyset$. If for some color $i\in\{1,2\}$, there is no path of color $i$ between $u$ and $v$ in the coloring $c$ of $H$, then we color $uv$ with $i$ to obtain the required 2-linear coloring of $G$. So we can assume that there is both a path of color 1 and a path of color 2 between $u$ and $v$ in the coloring $c$ of $H$. Let $w\in N_G(v)\setminus\{u\}$. Let us assume without loss of generality that $c(vw)=1$. Now changing the color of $vw$ to 2 and then coloring $uv$ with 1 gives a 2-linear coloring of $H$ with no monochromatic vertices.
\hfill\qed
\end{proof}

\section{Linear arboricity of 2-degenerate graphs}
In this section, we investigate the linear arboricity of $2$-degenerate graphs and identify some classes of 2-degenerate graphs that have linear arboricity equal to $\left\lceil\frac{\Delta}{2}\right\rceil$. Since $\left\lceil \frac{\Delta}{2}\right \rceil =  \left\lceil \frac{\Delta+1}{2}\right \rceil$ when $\Delta$ is odd, the linear arboricity of $2$-degenerate graphs having odd maximum degree is exactly $\left\lceil \frac{\Delta}{2}\right \rceil$. This gives rise to the following natural question: Is the linear arboricity of every $2$-degenerate graph with even maximum degree also equal to $\left\lceil \frac{\Delta}{2}\right \rceil$? Since cycles are $2$-degenerate graphs that need at least 2 colors in any linear coloring, we can hope to get an upper bound of $\left\lceil\frac{\Delta}{2}\right\rceil$ on the linear arboricity of 2-degenerate graphs only when $\Delta \geq 3$. We first show that the linear arboricity of $2$-degenerate graphs is $\left \lceil \frac{\Delta}{2}\right \rceil$ when $\Delta \geq 5$ in Section~\ref{sec:morethan4}. We conjecture that every 2-degenerate graph having maximum degree 4 has a 2-linear coloring. Note that if this conjecture is true, then it will follow that every 2-degenerate graph has linear arboricity equal to $\left\lceil\frac{\Delta}{2}\right\rceil$ as long as $\Delta\geq 3$. In Section~\ref{sec:atmost4}, we show that certain kinds of 2-degenerate graphs having maximum degree 4 have 2-linear colorings.
\subsection{Maximum degree greater than 4}\label{sec:morethan4}
\begin{theorem}\label{greater4}
Let $G$ be any $2$-degenerate graph with $\Delta(G)\leq 2k$, where $k\geq 3$ is an integer. Then $\chi'_l(G)\leq k$.
\end{theorem}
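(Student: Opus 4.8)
The plan is to mirror the inductive strategy used for Theorem~\ref{thm:3-deg}, but to exploit the lower degeneracy in order to make the reductions cleaner. I would induct on $|E(G)|$, the base case being immediate. Using the Observation guaranteeing a pivot edge, I would fix a pivot edge $uv$ with $d_G(u)\le 2$ and with $v$ a pivot, so that $v$ has at most two neighbors of degree more than $2$.

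First I would dispose of the case $d_G(v)<2k$. Deleting $uv$ gives a $2$-degenerate graph $H=G-uv$ with $\Delta(H)\le 2k$ and fewer edges, so by induction $H$ has a $k$-linear coloring $c$. Since $d_H(v)=2|\Twice(v)|+|\Once(v)|\le 2k-2$ while $|\Twice(v)|+|\Once(v)|+|\Missing(v)|=k$, the set $\Missing(v)\cup\Once(v)$ is nonempty, so some color has spare capacity at $v$; as $d_H(u)\le 1$, the vertex $u$ forbids at most one color, and any color chosen for $uv$ can close a monochromatic cycle only through $u$, which would require $u$ to already carry that color. A short count then shows a safe color always remains, giving a $k$-linear coloring of $G$.

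From here I would assume $d_G(v)=2k$, the tight case in which every color must ultimately appear exactly twice at $v$. Because $v$ is a pivot it has at most two neighbors of degree exceeding $2$, hence at least $2k-2\ge 4$ neighbors of degree at most $2$ (this is where $k\ge 3$ is used); thus there is $w\in N(v)\setminus\{u\}$ with $d_G(w)\le 2$. As in the proof of Theorem~\ref{thm:3-deg} I would first clear away the degenerate adjacencies --- the case $uw\in E(G)$, and the case in which $u,w$ have a common neighbor other than $v$ --- by small deletions and recolorings, so that afterwards $uw\notin E(G)$ and $N(u)\cap N(w)=\{v\}$. Writing $x,y$ for the (at most one each) remaining neighbors of $u,w$, I would then delete the two edges $uv,vw$ and identify $w$ with $u$ (legitimate by the common-neighbor condition), obtaining $H'=(G-\{uv,vw\})/(u,w)$. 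Since $d_G(u),d_G(w)\le 2$, the merged vertex has degree at most $2$, so $H'$ is $2$-degenerate with $\Delta(H')\le 2k$ and $|E(H')|=|E(G)|-2$. By induction $H'$ is $k$-linearly colorable, and splitting the identified vertex (via Observation~\ref{obs:id}) yields a $k$-linear coloring $c$ of $G-\{uv,vw\}$, in which $d(v)=2k-2$.

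It remains to color $uv$ and $vw$, and I expect this to be the main obstacle. The capacity count at $v$ forces exactly one of two situations: either $|\Once(v)|=2$ and $|\Missing(v)|=0$, in which case the two edges must receive the two once-used colors, one each; or $|\Missing(v)|=1$ and $|\Once(v)|=0$, in which case both edges must receive the single missing color. In each situation the colors available at $v$ are essentially forced, so the only thing that can go wrong is the creation of a monochromatic cycle, and this is precisely where the identification does the real work. I would argue that any monochromatic cycle produced by coloring $uv,vw$ must pass through a new edge and hence, after re-identifying $u$ with $w$, would correspond to a monochromatic cycle already present in the valid coloring $c'$ of $H'$ --- an impossibility. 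Concretely, such a bad cycle would force two distinct monochromatic paths between the merged vertex and a neighbor of $v$ (using $x\ne y$, which follows from the common-neighbor condition), contradicting that the relevant color class of $c'$ is a linear forest; Observation~\ref{obs:paths} handles the residual case distinctions. This shows that $uv$ and $vw$ can always be colored, completing the induction.
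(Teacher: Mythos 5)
Your main reduction (delete $uv,vw$, identify $w$ with $u$, recolor the two edges using the fact that a bad monochromatic $u$--$w$ path would close into a monochromatic cycle in $H'$) is sound, but the step you wave away --- ``clear away the degenerate adjacencies by small deletions and recolorings'' --- hides a genuine gap in the common-neighbor case, and the parity of your reduction is exactly what makes it fail. If $N(u)=N(w)=\{v,z\}$ you cannot identify $u$ with $w$, so you are forced to delete the four edges of the $4$-cycle $u,v,w,z$ and recolor them. After that deletion $d_H(v)=2k-2$ is \emph{even}, so the count $|\Once(v)|+2|\Missing(v)|=2$ permits $\Once(v)=\emptyset$ and $\Missing(v)=\{a\}$, in which case both $uv$ and $vw$ are forced to receive $a$; if the same happens at $z$ with the same color $a$ (which the induction hypothesis cannot exclude), all four edges are forced to $a$ and you create a monochromatic $4$-cycle. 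This is realizable: for $G=K_{2,6}$ (so $k=3$), $H=K_{2,4}$ admits a $3$-linear coloring whose two color classes are the paths $m_1vm_2zm_3$ and $m_3vm_4zm_1$, leaving color $3$ missing at both $v$ and $z$ with $\Once(v)=\Once(z)=\emptyset$; no extension colors the four new edges. Note also that nothing in your argument essentially uses $k\ge 3$, so if it were correct it would prove the paper's Conjecture~2 for $\Delta=4$, which the authors leave open --- a strong hint that something must break.

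The paper avoids all of this by choosing a different deletion: since $d_G(v)=2k\ge 6$, the pivot $v$ has at least four neighbors of degree at most $2$, and the proof deletes \emph{three} pivot edges $uv,vw,vx$ (no vertex identification, no case analysis on adjacencies among $u,w,x$). This makes $d_H(v)=2k-3$ \emph{odd}, so either $|\Once(v)|=3$ or $|\Missing(v)|=|\Once(v)|=1$, and the three edges can always be assigned colors (a cyclic-shift assignment in the first case, Observation~2 in the second) because every modified edge is incident with $v$ alone --- no second high-degree vertex ever imposes constraints. This is also precisely where $k\ge 3$ is used. To repair your proof you would either need to adopt the three-edge deletion in the common-neighbor case (at which point you are reproducing the paper's argument) or strengthen the induction hypothesis to forbid the bad coloring of $H$, in the spirit of the ``at most one monochromatic vertex'' invariant of Theorem~5.
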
	
\begin{proof}
We prove this by induction on the number of edges of $G$.
If $E(G)=\emptyset$, then the theorem is trivially true. So let us assume that $|E(G)|\geq 1$. Then we know that there exists a pivot edge $uv$ in $G$, where $v$ is a pivot and $d_G(u)\leq 2$.
 First suppose that $d_G(v)<2k$.
 Then let $H=G-uv$. We can assume by the induction hypothesis that there is a $k$-linear coloring of $H$.
 As $d_H(v)\leq 2k-2$, either $|\Missing(v)\cup\Once(v)|\geq 2$ or $|\Missing(v)|\geq 1$. Note that $d_H(u)\leq 1$. We can now color $uv$ with either a color in $\Missing(v)$ (if $\Missing(v)\neq \emptyset$) or a color in $\Once(v)\setminus\Colors(u)$ to get a $k$-linear coloring of $G$.
 
 So we shall assume that $d_G(v)=2k$. Observe that as $k\geq 3$, we have $d_G(v)\geq 6$, which means that there are at least 4 pivot edges incident to $v$ in $G$. Let $vw,vx$ be two pivot edges distinct from $uv$ and let $H=G-\{uv,vw,vx\}$. By the induction hypothesis, there exists a $k$-linear coloring $c$ of $H$. Since $d_H(v)=2k-3$, we have either $|\Once(v)|=3$ or $|\Missing(v)|=|\Once(v)|=1$. Let us first consider the case when $|\Once(v)|=3$. Let $\Once(v)=\{c_0,c_1,c_2\}$. Since for each $j\in\{0,1,2\}$, there can be a path colored $c_j$ from $v$ to at most one vertex in $\{v,w,x\}$, and $d_H(u)=d_H(w)=d_H(x)=1$, there exists a bijection $f:\{u,w,x\}\rightarrow\{0,1,2\}$ such that for each $z\in\{u,w,x\}$, if there is a path colored $c_j$ (where $j\in\{0,1,2\}$) having $v$ and $z$ as endvertices, then $f(z)=j$ (such a bijection $f$ can be constructed as follows: for each vertex $z\in\{u,w,x\}$ such that there is a path colored $c_j$ having endvertices $v$ and $z$, we assign $f(z)=j$, and we map the remaining vertices to arbitrarily chosen distinct integers from $\{0,1,2\}$ that have not been assigned so far). Now we color each edge $vz$, where $z\in\{u,w,x\}$, with the color $c_{(f(z)+1)\mod 3}$. This gives a valid $k$-linear coloring of $G$. Now consider the case when $|\Missing(v)|=|\Once(v)|=1$. Let $\Missing(v)=\{a\}$ and $\Once(v)=\{b\}$. Since $d_H(u)=d_H(w)=d_H(x)=1$, at most two vertices in $\{u,w,x\}$ can be present in any given monochromatic path. So we can assume without loss of generality that there is no monochromatic path colored $a$ having $u$ as an endvertex and one of $\{w,x\}$ as its other endvertex. We know that one of the vertices $\{w,x\}$ is not the other endvertex of the path colored $b$ starting at $v$. Let us assume without loss of generality that $w$ is such a vertex. Now we color $uv,vx$ with $a$ and $vw$ with $b$ to get a $k$-linear coloring of $G$.
 \hfill\qed
\end{proof}
\begin{corollary}\label{greaterequal5}
For any 2-degenerate graph $G$, $\chi'_l(G)=\left\lceil\frac{\Delta(G)}{2}\right\rceil$ when $\Delta(G)\geq 5$.
\end{corollary}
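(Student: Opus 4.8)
The plan is to derive the corollary directly from Theorem~\ref{greater4} together with the universal lower bound on linear arboricity recorded in the introduction, so that essentially no new combinatorial work is required. Recall that for every graph $G$ one has $\chi'_l(G)\geq\left\lceil\frac{\Delta(G)}{2}\right\rceil$, because in any linear coloring each color appears on at most two edges incident with a given vertex, and a vertex of degree $\Delta(G)$ must therefore see at least $\left\lceil\frac{\Delta(G)}{2}\right\rceil$ distinct colors. Hence the whole task reduces to establishing the matching upper bound $\chi'_l(G)\leq\left\lceil\frac{\Delta(G)}{2}\right\rceil$ for 2-degenerate graphs with $\Delta(G)\geq 5$.

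For the upper bound I would set $k=\left\lceil\frac{\Delta(G)}{2}\right\rceil$ and simply check that this choice meets the two hypotheses of Theorem~\ref{greater4}. First, $\Delta(G)\leq 2k$ holds automatically because $2\left\lceil\frac{\Delta(G)}{2}\right\rceil\geq\Delta(G)$; concretely, $2k=\Delta(G)$ when $\Delta(G)$ is even and $2k=\Delta(G)+1$ when $\Delta(G)$ is odd. Second, the requirement $k\geq 3$ is precisely where the assumption $\Delta(G)\geq 5$ enters: since $\lceil x\rceil\geq 3$ exactly when $x>2$, we get $\left\lceil\frac{\Delta(G)}{2}\right\rceil\geq 3$ as soon as $\Delta(G)\geq 5$ (and this would fail for $\Delta(G)=4$, which is exactly why the corollary excludes that case). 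With both hypotheses verified, Theorem~\ref{greater4} gives $\chi'_l(G)\leq k=\left\lceil\frac{\Delta(G)}{2}\right\rceil$.

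Combining the two bounds yields $\chi'_l(G)=\left\lceil\frac{\Delta(G)}{2}\right\rceil$, which is the claim. There is no genuine obstacle to overcome here: all of the coloring constructions are carried out inside Theorem~\ref{greater4}, and the corollary is merely the observation that taking $k$ to be the lower-bound value $\left\lceil\frac{\Delta}{2}\right\rceil$ always lands in the admissible range $\Delta\leq 2k$, $k\geq 3$ of that theorem. The only point I would double-check is the boundary case $\Delta(G)=5$, where $k=3$ sits exactly at the threshold of the theorem's hypothesis, and the even/odd split above confirms the argument goes through there as well.
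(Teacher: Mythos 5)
Your proposal is correct and matches the paper's (implicit) reasoning exactly: the corollary follows by combining the universal lower bound $\chi'_l(G)\geq\left\lceil\frac{\Delta(G)}{2}\right\rceil$ with Theorem~\ref{greater4} applied at $k=\left\lceil\frac{\Delta(G)}{2}\right\rceil$, where $\Delta(G)\geq 5$ guarantees $k\geq 3$. Your verification of the boundary case $\Delta(G)=5$ is also accurate.
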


\subsection{Maximum degree at most 4}\label{sec:atmost4}
As noted before, the upper bound given by Corollary~\ref{greaterequal5} does not hold for maximum degree 2. We conjecture that every 2-degenerate graph $G$ having $\Delta(G)\leq 4$ has $\chi'_l(G)\leq 2$.

\begin{conj}\label{con:2-deg}
Every 2-degenerate graph $G$ with maximum degree at most $4$ has a 2-linear coloring.
\end{conj}
\medskip

In the following sections, we show that the conjecture is true for some subclasses of 2-degenerate graphs. In fact, in each of these cases, we exhibit the presence of a 2-linear coloring satisfying some additional properties.

\subsubsection{Graphs with a large number of edges}
~\medskip

We now prove that the conjecture is true if $|E(G)|\geq 2|V(G)|-5$.
\begin{definition}
We define $\mathcal{F}_k=\{G:$ where $G$ is a 2-degenerate graph of maximum degree at most 4 and $|E(G)|=2|V(G)|-k\}$.
\end{definition}
We observe that $\{\mathcal{F}_k:k\geq 2\}$ is a partition of the set of 2-degenerate graphs of maximum degree at most 4 (note that the only 2-degenerate graph having $2n-2$ edges is the single vertex graph).

\begin{observation}\label{class}
For all $G\in \mathcal{F}_k$ and any vertex $v\in V(G)$ having degree at most 2, we have $G-v \in \mathcal{F}_l$ where $l\leq k$. 
\end{observation}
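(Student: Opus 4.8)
The plan is to reduce the statement to a single edge-counting identity. First I would verify that $G-v$ still lies in the domain of the partition $\{\mathcal{F}_k : k\geq 2\}$, i.e. that it remains a $2$-degenerate graph of maximum degree at most $4$. Both properties are inherited: $G-v$ is a subgraph of $G$ and every subgraph of a $2$-degenerate graph is $2$-degenerate, while deleting the vertex $v$ cannot raise the degree of any remaining vertex, so $\Delta(G-v)\leq\Delta(G)\leq 4$. Consequently there is a unique index $l$ with $G-v\in\mathcal{F}_l$, determined by the defining relation $|E(G-v)|=2|V(G-v)|-l$.

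Next I would compute $l$ explicitly. Deleting $v$ removes exactly one vertex and exactly $d_G(v)$ edges, so $|V(G-v)|=|V(G)|-1$ and $|E(G-v)|=|E(G)|-d_G(v)$. Using the hypothesis $G\in\mathcal{F}_k$, that is $|E(G)|=2|V(G)|-k$, I would substitute to obtain
$$l = 2|V(G-v)|-|E(G-v)| = 2(|V(G)|-1)-(|E(G)|-d_G(v)) = k-2+d_G(v).$$

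Finally, the assumption $d_G(v)\leq 2$ yields $l=k-2+d_G(v)\leq k$, which is precisely the desired conclusion. I do not expect any genuine obstacle here: the only points that require a little care are the bookkeeping that $G-v$ inherits both defining properties (so that it belongs to the partition at all, and hence to a well-defined $\mathcal{F}_l$) and the trivial boundary case in which $v$ is an isolated vertex of a single-vertex graph, which one may set aside by assuming $|V(G)|\geq 2$. The substance of the argument is entirely captured by the one-line count displayed above.
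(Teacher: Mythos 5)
Your proof is correct, and it is exactly the intended argument: the paper states this observation without proof, treating the edge count $l = 2|V(G-v)| - |E(G-v)| = k - 2 + d_G(v) \leq k$ as immediate. Your additional bookkeeping (that $G-v$ inherits $2$-degeneracy and maximum degree at most $4$, so it lies in the partition at all) is a reasonable bit of extra care, not a deviation.
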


\begin{lemma}\label{connected}
Let $G\in\bigcup_{j=2}^{5}\mathcal{F}_j$. Then $G$ can have at most 2 components of which at least one is a single vertex.
\end{lemma}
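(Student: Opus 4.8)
The plan is to run a weighting (deficiency) argument on the connected components of $G$. Write $C_1,\dots,C_m$ for the components, set $n_i=|V(C_i)|$, and define the deficiency $d_i=2n_i-|E(C_i)|$. Each $C_i$ is itself a $2$-degenerate graph, so the extremal edge count recalled in the introduction (at most $2n-\binom{3}{2}=2n-3$ edges for a $2$-degenerate graph on $n\ge 2$ vertices) gives $|E(C_i)|\le 2n_i-3$ whenever $n_i\ge 2$, while a single-vertex component carries no edge. Consequently $d_i=2$ if $C_i$ is a single vertex and $d_i\ge 3$ otherwise. Summing over all components and using that $G\in\bigcup_{j=2}^{5}\mathcal{F}_j$, I obtain
$$5\ge 2|V(G)|-|E(G)|=\sum_{i=1}^{m}d_i.$$

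First I would split the components into the $s$ single-vertex components and the $\ell=m-s$ components with at least two vertices, so that the displayed identity becomes $2s+3\ell\le\sum_i d_i\le 5$. The immediate consequence is $\ell\le 1$: two or more non-trivial components would already force $\sum_i d_i\ge 6$, contradicting the upper bound $5$. This single inequality $2s+3\ell\le 5$ is the engine of the whole argument.

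Next I would bound the total number of components directly from $2s+3\ell\le 5$. If $\ell=0$ then $2s\le 5$ gives $s\le 2$, so $m=s\le 2$; if $\ell=1$ then $2s+3\le 5$ gives $s\le 1$, so $m=s+1\le 2$. In either case $G$ has at most two components. For the structural part, I note that $m=2$ forces $(s,\ell)\in\{(2,0),(1,1)\}$, and in both of these cases $s\ge 1$, i.e. at least one of the two components is a single vertex; when $m\le 1$ there is nothing further to verify.

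I do not expect a genuine obstacle here. The only points needing care are the edge bound $|E(C_i)|\le 2n_i-3$ for a non-trivial $2$-degenerate component (exactly the extremal count quoted earlier) and the bookkeeping that a single vertex contributes precisely $2$ to the deficiency while every larger component contributes at least $3$. Deriving $2s+3\ell\le 5$ is what simultaneously yields $\ell\le 1$ and $m\le 2$ and pins the two-component case to configurations containing a trivial component, so the main task is simply to state these inequalities cleanly and read off the three short cases.
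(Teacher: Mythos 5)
Your proof is correct and follows essentially the same route as the paper's: both decompose $G$ into components, observe that the ``deficiency'' $2n_i-|E(C_i)|$ is additive over components, equals $2$ exactly for a single-vertex component and is at least $3$ otherwise, and then read off the conclusion from the bound $\sum_i d_i\le 5$. Your explicit inequality $2s+3\ell\le 5$ is just a slightly more systematic packaging of the paper's two contradiction cases ($r\ge 3$, and $r=2$ with both $l_1,l_2\ge 3$).
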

\begin{proof}
Let $G_1,G_2,\ldots,G_r$ be the subgraphs of $G$ induced by the connected components of $G$. By Observation~\ref{class}, we have that for each $i\in\{1,2,\ldots,r\}$, $G_i\in [l_i]$, where $2\leq l_i\leq 5$. It can be seen that $|E(G)|=2(|V(G_1)|+|V(G_2)|+\cdots+|V(G_r)|)-(l_1+l_2+\cdots+l_r)$. Clearly, if $r\geq 3$, then we have $|E(G)|\leq 2|V(G)|-6$, which is a contradiction to the fact that $G\in\bigcup_{j=2}^5 \mathcal{F}_j$. So we can assume that $r=2$. Now if both $l_1,l_2\geq 3$, then we again have $|E(G)|\leq 2|V(G)|-6$, leading to the same contradiction. Thus we conclude that at least one of $l_1,l_2$ is equal to 2, or in other words, one of the subgraphs $G_1,G_2$ is a single vertex.\hfill\qed
\end{proof}

We now show that Conjecture~\ref{con:2-deg} is true if $|E(G)|\geq 2|V(G)|-5$.

\begin{theorem}\label{lessequal4}
Every 2-degenerate graph $G$ with maximum degree at most 4 having at least $2|V(G)|-5$ edges has a 2-linear coloring in which there is at most one monochromatic vertex.
\end{theorem}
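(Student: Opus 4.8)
The plan is to prove the statement by induction on $|V(G)|$, the base cases (small graphs in $\bigcup_{j=2}^{5}\mathcal{F}_j$) being checked directly. By Lemma~\ref{connected}, a graph $G\in\bigcup_{j=2}^{5}\mathcal{F}_j$ has at most two components, one of which is an isolated vertex; since an isolated vertex carries no edges and cannot be monochromatic, it suffices to color the non-trivial component, so I may assume $G$ is connected. As $G$ is $2$-degenerate it has a vertex $v$ of degree at most $2$; set $H=G-v$. By Observation~\ref{class}, $H$ again lies in $\bigcup_{j=2}^{5}\mathcal{F}_j$, so the induction hypothesis gives a $2$-linear coloring $c$ of $H$ with at most one monochromatic vertex, which I will extend to $G$ by coloring the one or two edges incident with $v$.

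If $d_G(v)\le 1$ the extension is trivial: a vertex of degree $1$ can never be monochromatic, and its single edge $vu$ can be given any color in $\Missing(u)\cup\Once(u)$ (which is nonempty since $d_H(u)\le 3$ forbids both colors from lying in $\Twice(u)$); this adds no monochromatic vertex and closes no monochromatic cycle. So the real work is the case $d_G(v)=2$ with neighbors $u_1,u_2$. Here I want to color $vu_1,vu_2$ with two \emph{different} colors, so that $v$ is not monochromatic and the count of monochromatic vertices does not increase. Because $d_G(u_i)\le 4$ we have $d_H(u_i)\le 3$, and the only genuine constraint is that if $d_H(u_i)=3$ then $u_i$ has two edges of one color and one of the other, so that $vu_i$ is \emph{forced} to take the unique color in $\Once(u_i)$. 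A coloring of $vu_1,vu_2$ with distinct colors therefore exists whenever at least one of $u_1,u_2$ has degree at most $2$ in $H$, or both have degree $3$ but their forced colors differ; in all these cases $v$ is non-monochromatic and we are done, using Observation~\ref{obs:paths} to verify that no monochromatic path already joins $u_1$ and $u_2$ so that no cycle is created.

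The single hard case is when $d_H(u_1)=d_H(u_2)=3$ (i.e.\ $d_G(u_1)=d_G(u_2)=4$) and both edges are forced to the \emph{same} color $a$. Coloring both $vu_1,vu_2$ with $a$ makes $v$ monochromatic, and it is outright impossible when $u_1,u_2$ are the two endpoints of a common color-$a$ path, since this would close a monochromatic cycle; the same obstruction appears in the sub-case $u_1u_2\in E(H)$. I expect this to be the main obstacle, and I would resolve it by a Kempe-type recoloring of $H$: starting from one of the two edges of color $\overline a$ at $u_1$, flip a maximal alternating $\{a,\overline a\}$ trail, which changes the pattern at $u_1$ so that $vu_1$ may receive $\overline a$ while $vu_2$ keeps $a$, leaving $v$ non-monochromatic. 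The delicate points are that this recoloring must not over-fill any vertex, must not create a monochromatic cycle, and, crucially, must not create a \emph{second} monochromatic vertex; when $H$ already carries its one permitted monochromatic vertex, I would instead use the swap to relocate or eliminate that vertex so that the global count stays at most one. Since every non-forced step of the induction keeps $v$ non-monochromatic, the unique monochromatic vertex---if it is present at all---can be introduced only once, precisely at a point where the recoloring is genuinely unavoidable, which is exactly what the bound of one monochromatic vertex permits.
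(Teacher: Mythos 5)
Your reduction (delete one degree-2 vertex $v$, extend a coloring of $H=G-v$ over the one or two edges at $v$) is not the paper's reduction, and the place where you stop --- the ``hard case'' --- is precisely where the proof has to happen; as written there is a genuine gap there. Two problems. First, your classification of the easy cases is already off: a neighbor $u_i$ with $d_H(u_i)=2$ can also force the color of $vu_i$ (namely when $u_i$ is monochromatic in $H$, the edge $vu_i$ must receive the color in $\Missing(u_i)$), so the bad situation ``both edges forced to the same color'' can occur even when one of $u_1,u_2$ has degree $2$ in $H$. Second and more seriously, the Kempe-type recoloring you invoke to escape the hard case is only named, not performed, and it is far from routine in this setting: a color class of a linear coloring is a linear forest, not a matching, so a ``maximal alternating $\{a,\bar a\}$ trail'' is not even well defined (at a vertex carrying two edges of color $\bar a$ and one of color $a$ the trail has two ways to continue), and flipping colors along any such walk can give some vertex three edges of one color, close a monochromatic cycle elsewhere, or turn a previously bichromatic degree-2 vertex into a second monochromatic vertex. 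You would also need the swap not to interact badly with the one monochromatic vertex $H$ is already permitted to have, which may sit anywhere in the graph. None of this is established, and your closing remark that the single allowed monochromatic vertex ``can be introduced only once'' is not an inductive argument: at every level the hypothesis already allows one monochromatic vertex in $H$, and you must show the extension never adds another.

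The paper sidesteps the hard case by choosing a different induction step. Rather than deleting an arbitrary degree-2 vertex, it works at a pivot $v$ of degree 3 or 4 having degree-2 neighbors. For a degree-4 pivot $v$ with degree-2 neighbors $u,w$ and $N(u)\neq N(w)$, it deletes the edges $vu,vw$ and \emph{identifies} $u$ with $w$; the resulting graph stays in the same class $\mathcal{F}_k$ (so the induction hypothesis still applies), and --- this is the key point --- the merged vertex has degree 2, so the inductive guarantee of at most one monochromatic vertex now says that $v$ and the merged vertex cannot both be monochromatic. That disjunction is exactly what is needed to color $vu$ and $vw$ without creating a second monochromatic vertex or a monochromatic cycle (Observation~\ref{obs:paths} rules out two same-colored paths from $v$ to $u$ and from $v$ to $w$ simultaneously). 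The case $N(u)=N(w)$ is handled by deleting both $u$ and $w$, and degree-3 pivots by deleting one degree-2 neighbor. To salvage your single-vertex-deletion approach you would have to either strengthen the induction hypothesis or carry out the recoloring argument in full; as it stands the proposal does not prove the theorem.
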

\begin{proof}
Let $2\leq k\leq 5$. We show that every graph $G\in \mathcal{F}_k$ has a 2-linear coloring in which $G$ contains at most one monochromatic vertex.
Our proof will be by induction on $|V(G)|$. We shall see that our induction step can be carried out to produce a graph with a smaller number of vertices than $G$ as long as $|V(G)|\geq 2$ and $G$ contains either a vertex of degree at most 1 or a pivot of degree at least 3. Note that by Observation~\ref{obs:pivot}, a pivot of degree at least 3 always exists in $G$ if $\Delta(G)\geq 3$. So the base case (when the induction step cannot be applied) is when every vertex of $G$ is of degree 2, or $G$ contains just a single vertex. In the latter case, the statement is trivially true. In the former case, $G$ is a disjoint union of cycles. By Lemma~\ref{connected}, we know that in this case, at most one component of $G$ can be an odd cycle, which means that there is a 2-linear coloring of $G$ with at most one monochromatic vertex (the edges of an even cycle can be given alternating colors so that no monochromatic vertex arises); so the statement is true in this case as well. This proves the base case. We shall now describe the induction step. We assume that the graph $G$ contains more than one vertex and that $G$ is not 2-regular.

Suppose that there is a vertex $u$ such that $d_G(u)\leq 1$. Let $H=G-u$. By Observation~\ref{class}, $H\in \mathcal{F}_l$, where $l\leq k$. Then by the induction hypothesis there is a 2-linear coloring of $H$ with at most one monochromatic vertex. If in $G$, there is an edge $uv$ incident on $u$, then color $uv$ with a color in $\Missing(v)$ if $d_H(v)\leq 1$, and with a color in $\Once(v)\cup\Missing(v)$ otherwise (such a color will exist as $d_H(v)\leq 3$). It is easy to see that this gives a 2-linear coloring of $G$ with at most one monochromatic vertex. So we shall assume that every vertex in $G$ has degree at least 2.

Suppose that $G$ contains a pivot $v$ such that $d_G(v)=3$. Let $uv$ be a pivot edge. Then $d_G(u)=2$ by the assumption above. Let $H=G-u$. It follows that $H\in \mathcal{F}_k$. Then using the induction hypothesis, there is a 2-linear coloring of $H$ using the colors $\{1,2\}$ and containing at most one monochromatic vertex. Let $x\in V(G)$ such that $N(u)=\{v,x\}$. We first color $ux$ with a color $p$ in the following way. If $d_H(x)\leq 1$, then we choose $p$ to be a color in $\Missing(x)$ and if $d_H(x)\geq 2$, then we choose $p$ to be a color in $p\in\Once(x)\cup\Missing(x)$ (note that this set is nonempty as $d_H(x)\leq 3$). We now have a 2-linear coloring of $H'=G-uv$ with at most one monochromatic vertex. Now we show how to extend this coloring to a 2-linear coloring of $G$ by assigning a color to the edge $uv$. Note that $d_{H'}(v)=2$. If $v$ is not a monochromatic vertex in $H'$, then we color $uv$ with a color in $\{1,2\}\setminus\{p\}$, so that we get a 2-linear coloring of $G$ with the same number of monochromatic vertices as there were in $H'$; so we are done. On the other hand, if $v$ is a monochromatic vertex in $H'$, then we color $uv$ with the color in $\Missing(v)$ so that we get a 2-linear coloring of $G$ which has at most one monochromatic vertex (note that even though $u$ may be a monochromatic vertex now, $v$ is no longer a monochromatic vertex). So we shall assume from here on that every pivot in $G$ has degree equal to 4.

\begin{figure}[h]
	\begin{center}
		\renewcommand{\vertexset}{(a,0,0),(w,-1,-1),(u,-1,1),(b,-2,0)}
		\renewcommand{\edgeset}{(u,a),(u,b),(a,w),(b,w)}
		\renewcommand{\defradius}{0.1}
		\renewcommand{\defthiedge}{0.5}
		\renewcommand{\defcoledge}{gray}
		\begin{tabular}{p{0.45\textwidth}p{0.45\textwidth}}
			\begin{center}
				\begin{tikzpicture}
				\draw [line width = 1] (0,0) -- (1,-1);
				\draw [line width = 1] (0,0) -- (1,1);
				\draw [line width = 1] (-2,0)-- (-3,1); 
				\draw [line width = 1] (-2,0)-- (-3,-1);
				\drawgraph
				\node [below=2] at (0,0) {$x$};
				\node [left=2] at (-2,0) {$v$};
			
				\node [above=2] at (-1,1) {$u$};
				\node [below=2] at (-1,-1) {$w$};
				\node [left] at (0.5,0.75) {1};
				\node [below] at (0.5,-0.75) {1};
				\node [right] at (-0.5,-0.90) {2};
				\node [right] at (-0.5,0.90) {2};
				\node [right] at (-1.7,0.90) {2};
				\node [right] at (-1.7,-0.90) {1};
				\node [ left=2] at (-2.5,.4) {1};
				\node [ left=2] at (-2.5,-0.4) {2};
				\end{tikzpicture}\\(a)
			\end{center}
			&
			\begin{center}
			\begin{tikzpicture}
			\draw [line width = 1] (0,0) -- (1,-1);
			\draw [line width = 1] (0,0) -- (1,1);
			\draw [line width = 1] (-2,0)-- (-3,1); 
			\draw [line width = 1] (-2,0)-- (-3,-1);
			\drawgraph
			\node [below=2] at (0,0) {$x$};
			\node [ left=2] at (-2,0) {$v$};
			
			\node [above=2] at (-1,1) {$u$};
			\node [below=2] at (-1,-1) {$w$};
			\node [left] at (0.5,0.75) {1};
			\node [below] at (0.5,-0.75) {2};
			\node [right] at (-0.5,-0.90) {2};
			\node [right] at (-0.5,0.90) {1};
			\node [right] at (-1.7,0.90) {2};
			\node [right] at (-1.7,-0.90) {1};
			\node [right] at (-2.7,-0.90) {1};
			\node [right] at (-2.7,0.90) {2};
				\end{tikzpicture}\\(b)
			\end{center}
		\end{tabular}
	\end{center}
	\caption{Constructing a 2-linear coloring when $N(u)=N(w)=\{v,x\}$.}
	\label{fig:C4}
\end{figure}
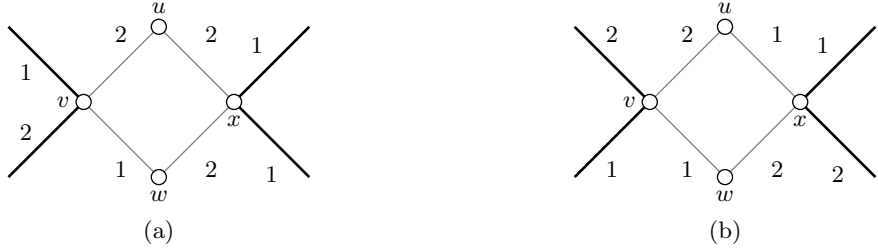
Let $v$ be a pivot in $G$. By our assumption, we have that $d_G(v)=4$. Since we have also assumed that every vertex in $G$ has degree at least 2, we know that there exist two neighbors $u,w$ of $v$ such that $d_G(u)=d_G(w)=2$. We first focus on the case when $N(u)=N(w)$. Suppose that $N(u)=N(w)=\{v,x\}$. Note that $x$ is also a pivot in $G$, which implies that $d_G(x)=4$; so $v$ and $x$ are symmetric. Consider the graph $H=G-\{u,w\}$. It can be seen that $H\in \mathcal{F}_k$. Using the induction hypothesis, we can conclude that there is a 2-linear coloring of $H$ using the colors $\{1,2\}$ and having at most one monochromatic vertex. In particular, at most one of $v,x$ can be a monochromatic vertex in $H$. Let us assume without loss of generality that $v$ is not a monochromatic vertex.
Now we color the edges $xu,xw,vu$ as shown in Figure~\ref{fig:C4}(a) if $x$ is monochromatic, and as shown in Figure~\ref{fig:C4}(b) if $x$ is not monochromatic.
It is easy to see that both cases, we obtain a 2-linear coloring of $G$ containing at most one monochromatic vertex.

\begin{figure}[h]
	\begin{center}
		\renewcommand{\vertexset}{(a,0,0),(x,-2,1),(u,-1,1.5),(v,-1,-1.5),(y,-2,-1),}
		\renewcommand{\edgeset}{(u,x),(u,a),(a,v),(v,y)}
		\renewcommand{\defradius}{0.1}
		\renewcommand{\defthiedge}{0.5}
		\renewcommand{\defcoledge}{gray}
		\begin{tabular}{p{0.33\textwidth}p{0.33\textwidth}p{0.33\textwidth}}
							\begin{center}
								\begin{tikzpicture}
								\draw [line width = 1] (-2,1) -- (-3,1);
								\draw [line width = 1] (-2,1) -- (-2,2);
								\draw [line width = 1] (-2,-1) -- (-3,-1);
								\draw [line width = 1] (-2,-2) -- (-2,-1);
								\draw [line width = 1] (0,0) -- (1,-1);
								\draw [line width = 1] (0,0) -- (1,1);
								\drawgraph
								\node [below=2] at (0,0) {$v$};
								\node [above left] at (-2,1) {$x_u$};
								\node [below left] at (-2,-1) {$x_w$};
								\node [above=2] at (-1,1.5) {$u$};
								\node [above=2] at (-1,-1.5) {$w$};
								\end{tikzpicture}\\$G$
							\end{center}
				&
					\begin{center}
						\renewcommand{\edgeset}{(u,x),(v,y)}
						\begin{tikzpicture}
						\draw [line width = 1] (-2,1) -- (-3,1);
						\draw [line width = 1] (-2,1) -- (-2,2);
						\draw [line width = 1] (-2,-1) -- (-3,-1);
						\draw [line width = 1] (-2,-2) -- (-2,-1);
						\draw [line width = 1] (0,0) -- (1,-1);
						\draw [line width = 1] (0,0) -- (1,1);
						\drawgraph
						\node [below=2] at (0,0) {$v$};
						\node [above left] at (-2,1) {$x_u$};
						\node [below left] at (-2,-1) {$x_w$};
						\node [above=2] at (-1,1.5) {$u$};
						\node [above=2] at (-1,-1.5) {$w$};
						\end{tikzpicture}\\$G'$
					\end{center}
		&
			\begin{center}
				\renewcommand{\vertexset}{(a,0,0),(x,-2,1),(u,-1,0),(y,-2,-1),}
				\renewcommand{\edgeset}{(u,x),(u,y)}
				\begin{tikzpicture}
				\draw [line width = 1] (-2,1) -- (-3,1);
				\draw [line width = 1] (-2,1) -- (-2,2);
				\draw [line width = 1] (-2,-1) -- (-3,-1);
				\draw [line width = 1] (-2,-2) -- (-2,-1);
				\draw [line width = 1] (0,0) -- (1,-1);
				\draw [line width = 1] (0,0) -- (1,1);
				\drawgraph
				\node [below=2] at (0,0) {$v$};
				\node [above left] at (-2,1) {$x_u$};
				\node [below left] at (-2,-1) {$x_w$};
				\node [below=2] at (-1,0) {$u$};
				\end{tikzpicture}\\$H$
			\end{center}
		\end{tabular}
	\end{center}
	\caption{Constructing the graphs $G'$ and $H$ from $G$.}
	\label{fig:C5}
\end{figure}
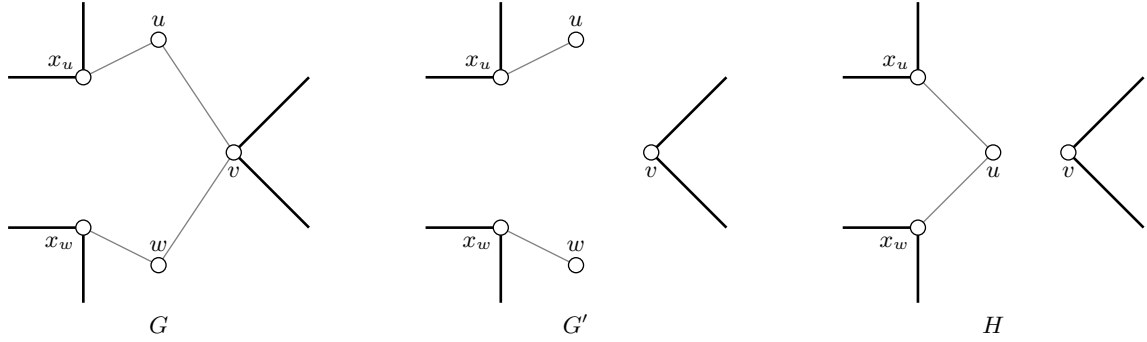
Next, we address the case when $N(u)\neq N(w)$. In this case, $N(u)\cap N(w)=\{v\}$. Let $N(u)\setminus N(w)=\{x_u\}$ and $N(w)\setminus N(u)=\{x_w\}$. Let $G'=G-\{vu,vw\}$. Now we identify the vertex $w$ with $u$ to get a new graph $H$. Formally, $H=G'/(u,w)$. Note that $H$ will not contain any parallel edges or self loops. The construction of $G'$ and $H$ from $G$ is shown in Figure~\ref{fig:C5}. It is also easy to see that $H\in \mathcal{F}_k$. Then by the induction hypothesis, there is a 2-linear coloring of $H$ using the colors $\{1,2\}$ and containing at most one monochromatic vertex. We now split back the identified vertex $u$ back to the vertices $u,w$ to get back the graph $G'$. We give the colors of the edges $ux_u,ux_w$ in $H$ to the edges $ux_u,wx_w$ respectively in $G'$. We now have a 2-linear coloring of $G'$ containing at most one monochromatic vertex. Now we show how to add the edges $vu$ and $vw$ to $G'$ and color them so that we get a 2-linear coloring of $G$ containing at most one monochromatic vertex. First let us suppose that $v$ is monochromatic in $G'$. Then it is monochromatic in $H$ as well, and therefore by the induction hypothesis, $u$ is not monochromatic in $H$. Thus the edges $ux_u$ and $wx_w$ have different colors in $G'$.
We color the edges $vu$ and $vw$ with the color in $\Missing(v)$ to obtain a 2-linear coloring of $G$ with exactly one monochromatic vertex (it will be either $u$ or $w$). Next, let us consider the case when $u$ is monochromatic in $H$. Then $v$ is not monochromatic in $H$ and therefore also in $G'$. So we have $\Colors(v)=\{1,2\}$ and we can assume without loss of generality that both the edges $ux_u$ and $wx_w$ are colored 1 in $G'$. Note that since $u$ and $w$ are distinct, there cannot be two monochromatic paths colored 1 in $G'$, one having endpoints $u,v$ and the other having endpoints $w,v$. If there is a monochromatic path colored 1 starting at $u$ and ending at $v$, then we color $uv$ with 2 and $vw$ with 1, otherwise we color $uv$ with 1 and $vw$ with 2. It is easy to see that we now have a 2-linear coloring of $G$ containing exactly one monochromatic vertex (it is either $u$ or $w$). Now we come to the last case, that is when neither $v$ nor $u$ are monochromatic in $H$. Then we have $\Colors(v)=\{1,2\}$ and we can assume without loss of generality that the edge $ux_u$ is colored 1 and the edge $wx_w$ is colored 2 in $G'$. Then we color $uv$ with 2 and $vw$ with 1. Since we have not introduced any new monochromatic vertex, this gives a 2-linear coloring of $G$ containing at most one monochromatic vertex.\hfill\qed
\end{proof}	
\medskip

We shall say that a 2-degenerate graph $G$ is \emph{maximal} if it contains $2|V(G)|-3$ edges. It is not difficult to see that maximal 2-degenerate graphs are exactly the graphs that can be constructed starting from a triangle and adding vertices of degree 2 iteratively.

\begin{corollary}
Every maximal 2-degenerate graph with maximum degree at most 4 contains a Hamiltonian path.
\end{corollary}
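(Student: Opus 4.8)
The plan is to deduce the Hamiltonian path from a 2-linear coloring by a one-line edge count, so that the only substantive input is the existence of such a coloring. Let $n=|V(G)|$. A maximal 2-degenerate graph has exactly $2n-3$ edges, and in particular at least $2n-5$ edges, so Theorem~\ref{lessequal4} immediately supplies a 2-linear coloring of $G$. (I will not actually need the stronger ``at most one monochromatic vertex'' conclusion; only existence of the coloring matters.) Write the two color classes as linear forests $F_1$ and $F_2$; since the coloring partitions $E(G)$, we have $|E(F_1)|+|E(F_2)|=2n-3$.

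Next I would invoke the elementary fact that a linear forest is acyclic, so a linear forest on the $n$ vertices of $G$ has at most $n-1$ edges. Combining this with $|E(F_1)|+|E(F_2)|=2n-3=(n-1)+(n-2)$ forces at least one color class, say $F_1$, to have exactly $n-1$ edges: if both had at most $n-2$ edges, the total would be at most $2n-4<2n-3$.

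The key step is then the observation that a linear forest on $n$ vertices with $n-1$ edges must be a single spanning path. Indeed, an acyclic graph on $n$ vertices with $n-1$ edges is a spanning tree and hence connected, and a connected linear forest is, by definition, a single path. Therefore $F_1$ is a path passing through all $n$ vertices of $G$, i.e.\ a Hamiltonian path, which finishes the proof.

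I do not expect a genuine obstacle here, since all the real work is carried by Theorem~\ref{lessequal4} and the residual argument is purely combinatorial counting. The only points I would state carefully are that the two color classes together account for all $2n-3$ edges (so they genuinely partition $E(G)$), and that attaining the forest edge bound $n-1$ forces connectedness, which in turn pins down a single Hamiltonian path rather than a union of shorter paths.
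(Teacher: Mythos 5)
Your proof is correct and follows essentially the same route as the paper's: invoke Theorem~\ref{lessequal4} to obtain a 2-linear coloring, count that the $2n-3$ edges split between two linear forests each of at most $n-1$ edges, so one class has exactly $n-1$ edges and is therefore a spanning tree that is a path, i.e.\ a Hamiltonian path. No gaps; your explicit remark that attaining the bound $n-1$ forces connectedness is exactly the implicit step in the paper's argument.
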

\begin{proof}
Let $G$ be such a graph, or in other words, $G\in\mathcal{F}_3$. By Theorem~\ref{lessequal4}, $G$ has a 2-linear coloring. Since the edges in each color class of this coloring form a linear forest, we know that there are at most $|V(G)|-1$ edges of each color. As $G$ has $2|V(G)|-3$ edges, we can conclude that one of the two color classes in this coloring contains exactly $|V(G)|-1$ edges. These edges then form a spanning linear forest with one connected component, which is nothing but a Hamiltonian path in $G$.\hfill\qed
\end{proof}

Both the conditions in the above corollary are necessary: the graph $K_{2,4}$ and the graph in Figure~\ref{fig:nonham} (both 2-degenerate graphs) have no Hamiltonian path. Note that the former has maximum degree 4 but is not maximal, whereas the latter is maximal but has maximum degree more than 4.
%Also, the linear time algorithm given by the proof of Theorem~\ref{lessequal4} can be used to compute a Hamiltonian path in a maximal 2-degenerate graph of maximum degree at most 4 in linear time.

\begin{figure}
    \centering
    \renewcommand{\vertexset}{(a,0,2.5),(b,0,-2.5),(c,-2,0),(d,2,0),(e,4,0),(f,6,0)}
    \renewcommand{\edgeset}{(a,b),(a,c),(b,c),(a,f),(a,d),(a,e),(b,f),(b,e),(b,d)}
    \renewcommand{\defradius}{0.15}
    \begin{tikzpicture}[scale=0.5]
    \drawgraph
    \end{tikzpicture}
    \caption{A 2-degenerate graph of maximum degree 5 which has no Hamiltonian path.}\label{fig:nonham}
  
\end{figure}
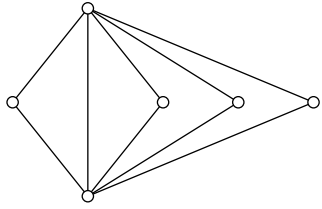
\subsubsection{Bipartite graphs}
~\medskip

In this section, we prove that Conjecture~\ref{con:2-deg} is true for bipartite graphs. In fact, we can show that such graphs have a 2-linear coloring containing no monochromatic vertex. The proof essentially follows the same arguments as in the proof of Theorem~\ref{lessequal4}.

\begin{theorem}\label{thm:bipartite}
Any bipartite 2-degenerate graph of maximum degree at most 4 has a 2-linear coloring without any monochromatic vertex.
\end{theorem}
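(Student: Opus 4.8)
The plan is to mirror the inductive argument of Theorem~\ref{lessequal4}, but now carry an even stronger invariant: a 2-linear coloring with \emph{no} monochromatic vertex at all. The induction will be on $|V(G)|$, removing a low-degree vertex or collapsing a pivot's degree-2 neighbors, exactly as before. The crucial structural advantage that bipartiteness gives us is that $G$ contains no odd cycles; in particular the obstruction that forced a single monochromatic vertex in the base case of Theorem~\ref{lessequal4}---an odd cycle, which cannot be properly 2-edge-colored into two linear forests without one repeated color---simply cannot occur. So in the base case, where $G$ is 2-regular, $G$ is a disjoint union of even cycles, each of which admits an alternating 2-coloring with no monochromatic vertex.

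For the induction step I would reuse the case analysis of Theorem~\ref{lessequal4} verbatim in structure, but verify at each juncture that no monochromatic vertex is forced. First, if $G$ has a vertex $u$ of degree at most $1$, delete it, color $H=G-u$ by induction with no monochromatic vertex, and color the pendant edge $uv$ (if present) with a color in $\Missing(v)\cup(\Once(v)\setminus\Colors(u))$; since $u$ has degree at most $1$ it cannot become monochromatic, and $v$ gains an edge so it is not made monochromatic either. Next, if there is a pivot $v$ of degree $3$ with a degree-2 neighbor $u$ where $N(u)=\{v,x\}$, delete $u$, color by induction, recolor $ux$ appropriately, and then color $uv$: here I must check that I can always avoid making both edges at $u$ the same color \emph{and} avoid leaving $v$ monochromatic. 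Because $d_{H'}(v)=2$ in the intermediate graph and I have two colors free to assign to $uv$, I can choose the color of $uv$ to differ from that of $ux$, keeping $u$ non-monochromatic, while simultaneously (using that $v$ had degree $3$, so one color is $\Once$ or $\Missing$ at $v$) keeping $v$ properly bichromatic.

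The substantive work is the degree-4 pivot case, split as in Theorem~\ref{lessequal4} into the subcase $N(u)=N(w)=\{v,x\}$ and the subcase $N(u)\cap N(w)=\{v\}$, where $u,w$ are degree-2 neighbors of the pivot $v$. In the first subcase, deleting $u,w$ gives $H=G-\{u,w\}\in\mathcal{F}_k$, and I must re-examine the colorings of Figure~\ref{fig:C4} (or produce analogous ones) to confirm that, when the induction now guarantees \emph{no} monochromatic vertex in $H$, I can add back the four edges at $u,w$ keeping both $v$ and $x$ (and $u,w$) non-monochromatic; the extra freedom from having no forced monochromatic vertex in $H$ should make this easier, not harder. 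The second subcase uses the identification $H=(G-\{vu,vw\})/(u,w)$ and splits back; here I would lean on Observation~\ref{obs:paths} to rule out two monochromatic paths of the same color reaching $v$ from $u$ and from $w$, and then choose the colors of $vu,vw$ so that neither $u$ nor $w$ becomes monochromatic. The main obstacle I anticipate is precisely this last split-back step: previously one was permitted to leave exactly one of $u,w$ monochromatic, whereas now I must color both $vu$ and $vw$ so that \emph{all four} vertices $u,w,v$ remain non-monochromatic, using only the two available colors together with the bipartiteness-enforced absence of odd cycles. I expect that the no-odd-cycle condition, applied through Observation~\ref{obs:paths}, is exactly what removes the parity conflict that made one monochromatic vertex unavoidable in the general case, and that a careful enumeration of the color patterns on $ux_u$, $wx_w$ and the paths at $v$ closes every case.
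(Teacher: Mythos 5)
Your proposal follows the same route as the paper's proof: induction on $|V(G)|$ with the identical case analysis (even-cycle base case, pendant/low-degree vertex, degree-3 pivot, degree-4 pivot split into the $N(u)=N(w)$ subcase and the identification-and-split-back subcase), and each step you outline does go through. The one subcase you flag as the main obstacle is in fact the easiest: because the identified vertex is non-monochromatic in $H$, the edges $ux_u$ and $wx_w$ automatically receive different colors, and cross-assigning those two colors to $vw$ and $vu$ keeps $u$, $v$ and $w$ all non-monochromatic without any appeal to Observation~\ref{obs:paths}.
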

\begin{proof}
Let $G$ be a bipartite 2-degenerate graph having $\Delta(G)\leq 4$.
Our proof will be by induction on $|V(G)|$.
The base case is when the graph contains just one vertex or is 2-regular, i.e. when it is a disjoint union of even cycles. It is easy to see that in both cases, the graph has a 2-linear coloring without any monochromatic vertices. Suppose first that $G$ contains a vertex $u$ such that $d_G(u)\leq 1$. By the induction hypothesis, there is a 2-linear coloring of the graph $H=G-u$ that does not contain any monochromatic vertex. We extend this to a 2-linear coloring of $G$ as follows. If $d_G(u)=0$, then there is nothing to do. If $d_G(u)=1$, then the edge $uv$ incident on $u$ is colored with the color in $\Missing(v)$ if $d_H(v)=1$, and is colored with a color in $\Once(v)\cup\Missing(v)$ if $d_H(v)\geq 2$. We now have a 2-linear coloring of $G$ containing no monochromatic vertex. So we shall assume from here on that every vertex has degree at least 2 in $G$. Next, suppose that $\Delta(G)\geq 3$. Then by Observation~\ref{obs:pivot} there exists a pivot $v$ in $G$ such that $d_G(v)\geq 3$. Suppose that $d_G(v)=3$.  Let $uv$ be a pivot edge. Then we let $H=G-uv$ and use the induction hypothesis to construct a 2-linear coloring of $H$ using colors $\{1,2\}$ and containing no monochromatic vertex. Note that $d_H(v)=2$ and $d_H(u)\leq 1$. Since there are no monochromatic vertices in the coloring, we have $\Colors(v)=\{1,2\}$. We color $uv$ with a color in $\{1,2\}\setminus\Colors(u)$ to get a 2-linear coloring of $G$ without any monochromatic vertex. Next, suppose that $d_G(v)=4$. Then we have at least two pivot edges $uv$ and $wv$ incident to $v$. Since we have assumed that every vertex has degree at least 2, we have $d_G(u)=d_G(w)=2$. Suppose that $N(u)=N(w)$. Then there exists a vertex $x$ such that $N(u)\cap N(w)=\{v,x\}$. Let $H=G-\{u,w\}$. By the induction hypothesis, there is a 2-linear coloring of $H$ using the colors $\{1,2\}$ and containing no monochromatic vertices. Clearly, $\Colors(v)=\Colors(x)=\{1,2\}$, as $H$ contains no monochromatic vertex. We color $xu,vw$ with 1 and $xw,vu$ with 2 to obtain the required 2-linear coloring of $G$. Finally, suppose that $N(u)\neq N(w)$. Let $N(u)\setminus N(w)=\{x_u\}$ and $N(w)\setminus N(u)=\{x_w\}$. Let $G'=G-\{vu,vw\}$ and let $H=G'/(u,w)$. By the induction hypothesis, there is a 2-linear coloring of $H$ using the colors $\{1,2\}$ and containing no monochromatic vertices. Consider the coloring of $G'$ obtained by retaining the color of $ux_u$ and giving the color of $ux_w$ to $wx_w$. Since $u$ was not a monochromatic vertex in $H$, we can assume without loss of generality that in the coloring of $G'$, the edge $ux_u$ is colored 1 and the edge $wx_w$ is colored 2. As $v$ was not a monochromatic vertex in $H$, it is not monochromatic in $G'$ either. So, we can color $uv$ with $2$ and $wv$ with 1 to obtain the required 2-linear coloring of $G$.
\hfill\qed
\end{proof}

\subsubsection{Partial 2-trees}
~\medskip

A graph $G$ is said to be a partial 2-tree if it has a tree decomposition of width at most 2. It is well known that partial 2-trees are 2-degenerate and are closed under taking minors. The following folklore result can be proved using this fact.
\begin{observation}\label{obs:configs}
Every partial 2-tree $G$ contains one of the following configurations:
\begin{enumerate}
\renewcommand{\labelenumi}{(\alph{enumi})}
\renewcommand{\theenumi}{(\alph{enumi})}
\item\label{degreeone} A vertex of degree at most one,
\item\label{shortcircuit} Two adjacent vertices each of degree two,
\item\label{C4} Two non-adjacent vertices of degree two that have the same set of neighbors,
\item\label{triangle} A triangle containing a vertex of degree 2, a vertex of degree 3, and a vertex of degree at least 3,
\item\label{2triangle} Two triangles having no common edges, but having a common vertex with degree 4, and in each triangle, apart from the common vertex, there is a vertex of degree 2 and a vertex of degree at least 4.
\end{enumerate}
\end{observation}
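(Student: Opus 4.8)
The plan is to argue by contradiction, exploiting the two properties highlighted just before the statement: partial 2-trees are 2-degenerate and closed under taking minors. Let $G$ be a partial 2-tree on the fewest vertices that contains none of the configurations (a)--(e); we may assume $G$ is connected, since a configuration appearing in one component is a configuration in $G$. Because (a) does not occur, $\delta(G)\geq 2$, and since $G$ is 2-degenerate it has a vertex $v$ with $d_G(v)=2$; write $N(v)=\{a,b\}$. A first observation trims the degrees of $a$ and $b$: as $v$ is adjacent to both of them, if either $a$ or $b$ had degree $2$ we would immediately have two adjacent vertices of degree $2$, i.e. configuration (b). Hence from now on $d_G(a),d_G(b)\geq 3$, and I would split on whether $a$ and $b$ are adjacent.

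Suppose first that $a$ and $b$ are adjacent, so $\{v,a,b\}$ spans a triangle in which $v$ has degree $2$ and $a,b$ have degree at least $3$. If either of $a,b$ has degree exactly $3$, the triangle realises configuration (d) (its third vertex has degree at least $3$) and we are done; so the only surviving subcase is the \emph{heavy triangle} in which $d_G(a),d_G(b)\geq 4$. Here I would look locally around one of these vertices of degree at least $4$ for a second pendant triangle: avoiding (b) forces the degree-$2$ neighbours of $a$ (and of $b$) to be pairwise non-adjacent, avoiding (c) forces distinct degree-$2$ vertices to have distinct neighbourhoods, and these constraints, combined with the minor reduction below applied to the remaining low-degree vertices, should pin down a vertex of degree $4$ carrying two edge-disjoint triangles of the required form, which is exactly configuration (e). This heavy-triangle subcase is where I expect the real work to lie.

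Now suppose $a$ and $b$ are non-adjacent. If some other degree-$2$ vertex also has neighbourhood $\{a,b\}$ then configuration (c) appears, so assume not. I would then \emph{suppress} $v$: delete it and add the edge $ab$, obtaining a graph $G'$ that is a minor of $G$ (hence again a partial 2-tree) with one fewer vertex, in which $a$ and $b$ retain their degrees at least $3$ and no parallel edge is created. By minimality $G'$ contains one of (a)--(e), and the crux is to lift such a configuration back to $G$. The only discrepancy between $G$ and $G'$ is the synthesised edge $ab$ together with the removal of $v$; a configuration of $G'$ avoiding $ab$ survives verbatim in $G$, while a configuration using $ab$ can be rerouted through the path $a$--$v$--$b$ or reinterpreted as one of (c)--(e) in $G$. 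Since $\delta(G')\geq 2$, configuration (a) cannot reappear in $G'$, which keeps the bookkeeping finite.

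The main obstacle, as flagged, is twofold: resolving the heavy-triangle subcase so that it is genuinely forced into configuration (e), and carrying out the lifting in the non-adjacent case for the triangle-based configurations (d) and (e), where the new edge $ab$ may itself be an edge of the witnessing triangle and so must be tracked carefully. A useful safeguard throughout is that $G$ is $K_4$-minor-free: whenever the degree and adjacency constraints forced by the absence of (a)--(e) seem to push the local structure to be too dense, one can instead exhibit a $K_4$ minor and contradict $G$ being a partial 2-tree. I would keep this $K_4$-minor route in reserve to close the heavy-triangle subcase should the direct search for configuration (e) prove awkward.
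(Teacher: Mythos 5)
The paper itself offers no proof of this observation---it is stated as folklore, derivable from the facts that partial 2-trees are 2-degenerate and minor-closed---so there is no in-paper argument to compare yours against; I can only assess the proposal on its own terms. The skeleton of your non-adjacent case is sound: suppressing $v$ (contracting $va$) yields a smaller partial 2-tree $G'$ in which all surviving vertices keep their degrees, and the lifting does close up --- configurations (a), (b), (c) transfer verbatim (the new edge $ab$ cannot witness (b) since $d(a),d(b)\geq 3$), and any triangle of a configuration (d) or (e) in $G'$ that uses the synthesized edge $ab$ must have as its third vertex a degree-2 vertex adjacent to both $a$ and $b$, which together with $v$ gives configuration (c) back in $G$. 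That part of your plan works.

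The genuine gap is the heavy-triangle subcase, which you flag but do not resolve, and whose resolution is essentially the whole content of the observation. After your reductions you are left with: every degree-2 vertex of $G$ lies in a triangle whose other two vertices each have degree at least 4. Your plan to ``look locally around one of these vertices of degree at least 4'' cannot pin down configuration (e) by itself: that configuration requires the shared vertex to have degree \emph{exactly} 4, so if $d(a),d(b)\geq 5$ neither endpoint of the heavy triangle is even a candidate; and even when $d(a)=4$, nothing in the absence of (a)--(d) forces the other two neighbours of $a$ to span a second triangle containing a degree-2 vertex and a degree-$\geq 4$ vertex. (The 2-tree obtained from a triangle $abc$ by hanging one degree-2 vertex on each of its three edges shows the configuration does appear, but it appears for global structural reasons, not because of any local obstruction at a single heavy triangle.) The $K_4$-minor ``safeguard'' you hold in reserve will not rescue this either, since the heavy-triangle situation is perfectly realizable in $K_4$-minor-free graphs. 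What is missing is a global argument --- for instance, inducting along the construction order of a 2-tree completion of $G$ and examining the last few vertices added, or a counting/discharging argument using $|E(G)|\leq 2|V(G)|-3$ to locate a degree-4 vertex carrying two pendant triangles --- and until that is supplied the proof is incomplete.
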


\begin{definition}
Let $G$ be a graph and let $V'\subseteq V(G)$ be the set of vertices of degree 2 in $G$. Given a set $\mathcal{S}=\{S_1,S_2,\ldots,S_k\}$ of pairwise disjoint subsets of $V'$, each of size 2, a 2-linear coloring of $G$ is said to \emph{satisfy} $\mathcal{S}$ if at most one vertex in each $S_i$, where $1\leq i\leq k$, is monochromatic.
\end{definition}

We refer to a collection of sets of the form $\mathcal{S}$ defined above, as a ``disjoint collection of pairs of degree 2 vertices in $G$''. 

\begin{theorem}\label{thm:p2tree}
Let $G$ be a partial 2-tree having maximum degree at most 4.
If $\mathcal{S}$ is a disjoint collection of pairs of degree 2 vertices of $G$,
then $G$ has a 2-linear coloring that satisfies $\mathcal{S}$.
\end{theorem}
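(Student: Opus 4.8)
The plan is to prove a statement that mirrors Theorems~\ref{lessequal4} and~\ref{thm:bipartite}: induct on $|V(G)|$, with the prescribed pairs $\mathcal{S}$ serving as the strengthening that makes the induction go through. For the inductive step I would invoke Observation~\ref{obs:configs} to locate one of the five configurations (a)--(e) in $G$, delete a constant number of vertices (and, for (e), a couple of edges) to obtain a smaller partial $2$-tree $G'$ of maximum degree at most $4$, translate $\mathcal{S}$ into a valid disjoint collection $\mathcal{S}'$ of pairs of degree-$2$ vertices of $G'$, apply the induction hypothesis to $(G',\mathcal{S}')$, and finally extend the colouring across the deleted elements while checking the usual linear-colouring conditions (at most two edges of each colour at every vertex and no monochromatic cycle) and that $\mathcal{S}$ is satisfied. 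The base case is the edgeless graph, which is trivial; Observation~\ref{obs:configs} guarantees that otherwise a configuration is always present, and each reduction strictly decreases $|V(G)|$.

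Configurations (a) and (b) are essentially the pendant/degree-two peeling (and, for (b), the smoothing of a degree-$2$ path, which is a minor operation) already used in the earlier proofs, and (c) is the ``$C_4$'' case treated like the $N(u)=N(w)$ situation of Theorem~\ref{lessequal4}; in each of these the extension of the colouring is routine, needing only the easy pair-bookkeeping described next. The genuinely new ingredient is the handling of $\mathcal{S}$, for which the key device is \emph{pair-swapping}: when a deletion makes a formerly high-degree vertex $a$ drop to degree $2$, I can attach an $\mathcal{S}$-constraint to $a$ in $G'$ so as to transport a constraint down the recursion. For configuration (d) (a triangle $uab$ with $d(u)=2$, $d(a)=3$, $d(b)\ge 3$) I set $G'=G-u$, so $a$ becomes a degree-$2$ vertex of $G'$. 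If $u$ lies in a pair $\{u,t\}$ of $\mathcal{S}$, I replace it by $\{a,t\}$ in $\mathcal{S}'$. After recursing: if $a$ is non-monochromatic one can colour $ua,ub$ with distinct colours (possible since $\Twice(a)=\emptyset$ and $\Twice(b)$ omits a colour), making $u$ non-monochromatic; if $a$ is monochromatic then $\{a,t\}$ forces $t$ non-monochromatic, so even when the colouring of $b$ forces $u$ monochromatic the pair $\{u,t\}$ is still satisfied. Since $t\notin\{a,b\}$ and $t$ is non-adjacent to $u$, its monochromaticity is unchanged from $G'$ to $G$, which is what makes the swap legitimate.

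The hard part will be configuration (e): a degree-$4$ apex $v$ carrying two edge-disjoint triangles $vu_1z_1$ and $vu_2z_2$ with $d(u_1)=d(u_2)=2$ and $d(z_1)=d(z_2)=4$. Here a naive deletion of $u_1,u_2$ leaves $v$ monochromatic on $\{vz_1,vz_2\}$ in some recursive colourings, which forces $vu_1=vu_2$ and can force \emph{both} $u_1$ and $u_2$ to be monochromatic---fatal if they lie in different pairs. My plan is to delete more: set $G'=G-\{u_1,u_2\}-\{vz_1,vz_2\}$, which isolates $v$ and drops each $z_i$ to degree $2$. Recolouring the six reinserted edges $vz_1,vz_2,vu_1,vu_2,u_1z_1,u_2z_2$ then has enough freedom that $v$ can always be balanced with two edges of each colour, and a short case analysis on whether $z_1,z_2$ are monochromatic in $G'$ yields: if both or neither $z_i$ is monochromatic, both $u_1,u_2$ can be made non-monochromatic; and if exactly one $z_i$ is monochromatic, exactly one of $u_1,u_2$ is forced monochromatic, but I may choose which. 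This settles the cases where $u_1,u_2$ share a pair or where at most one of them is paired; for $u_1,u_2$ in distinct pairs $\{u_1,t_1\},\{u_2,t_2\}$ I would replace both by the single pair $\{t_1,t_2\}$ in $\mathcal{S}'$, so that at least one $t_i$ is non-monochromatic and I can make monochromatic precisely the $u_i$ whose partner $t_i$ is non-monochromatic.

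The remaining work is verification rather than ideas: in each reduction one checks that $G'$ is a partial $2$-tree of maximum degree at most $4$ (immediate, since deleting vertices and edges preserves both properties and smoothing is a minor operation), that $\mathcal{S}'$ is a legal disjoint family of degree-$2$ pairs of $G'$, and that the reinserted edges create no monochromatic cycle. The last point is the only delicate validity check, and it follows in every case from Observation~\ref{obs:paths} together with the fact that the newly coloured edges form only short structures (triangles, or a $4$-cycle) attached at a single vertex, whose monochromatic colour classes cannot be prolonged into $G'$ because the relevant endpoints carry no same-coloured old edge.
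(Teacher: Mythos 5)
Your overall strategy---induction on $|V(G)|$ driven by Observation~\ref{obs:configs}, with pair-swapping used to transport the constraints of $\mathcal{S}$ down the recursion---is the paper's strategy, and your handling of configurations \ref{degreeone}--\ref{triangle} matches the paper's (in particular, the swap $\{u,t\}\mapsto\{a,t\}$ onto the degree-3 triangle vertex in configuration~\ref{triangle} is exactly what the paper does). However, there is a genuine error in your treatment of configuration~\ref{2triangle}, precisely at the point you identify as the crux. You claim that when exactly one of $z_1,z_2$ is monochromatic in the recursive colouring, exactly one of $u_1,u_2$ is forced monochromatic ``but I may choose which''. This is false. Suppose $z_1$ is monochromatic with both of its old edges coloured $1$ and $z_2$ is not. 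Then $vz_1$ and $u_1z_1$ must both receive colour $2$ (a vertex carries at most two edges of each colour), and $vu_1$ must then receive colour $1$ to avoid the monochromatic triangle $vu_1z_1$; hence $u_1$ is \emph{forced} non-monochromatic. Balancing $v$ now forces $\{vu_2,vz_2\}$ to use one edge of each colour, and the non-monochromaticity of $z_2$ forces $u_2z_2$ to take the colour opposite to $vz_2$; in both remaining assignments $u_2$ comes out monochromatic. So which $u_i$ is monochromatic is dictated by which $z_i$ is monochromatic, not chosen by you (this is visible in Figure~\ref{fig:p2-tree}(iv)).

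Consequently your replacement of $\{u_1,t_1\},\{u_2,t_2\}$ by the single pair $\{t_1,t_2\}$ does not suffice: it guarantees only that \emph{some} $t_i$ is non-monochromatic, while the induction hypothesis gives you no control over which $z_i$ is monochromatic, so the recursion may return a colouring in which $t_1$ is monochromatic and $z_2$ is monochromatic, forcing $u_1$ monochromatic as well and violating $\{u_1,t_1\}$. The repair is the paper's \emph{crossed} swap: since the $z_i$ drop to degree $2$ in the reduced graph, replace $\{u_1,t_1\}$ by $\{z_2,t_1\}$ and $\{u_2,t_2\}$ by $\{z_1,t_2\}$ (these are legal disjoint pairs of degree-2 vertices, as $t_i$ has degree $2$ and is distinct from the $z_j$, which have degree $4$ in $G$). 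Because $u_i$ is forced monochromatic exactly when the degree-4 vertex of the \emph{other} triangle is monochromatic, the pair $\{z_{3-i},t_i\}$ guarantees that whenever $u_i$ ends up monochromatic its original partner $t_i$ does not, which is what satisfying $\mathcal{S}$ requires. With that modification your argument for configuration~\ref{2triangle} goes through, and the rest of the proof is sound.
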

\begin{proof}
We shall prove the statement of the theorem by induction on $|V(G)|$. The base case when $G$ contains a single vertex is trivially true. By Observation~\ref{obs:configs}, $G$ contains one of the configurations \ref{degreeone}--\ref{2triangle} described in the statement of the observation. In each case, we shall show how to compute a 2-linear coloring of $G$ satisfying $\mathcal{S}$.

\begin{figure}[h]
\begin{center}
\renewcommand{\vertexset}{(a,0,0),(x,-2,0),(u,-1,1.5),(y,2,0),(v,1,1.5)}
\renewcommand{\edgeset}{(a,x),(u,x),(u,a),(a,y),(a,v),(y,v)}
\renewcommand{\defradius}{0.1}
\renewcommand{\defthiedge}{0.5}
\renewcommand{\defcoledge}{gray}
\begin{tabular}{p{0.45\textwidth}p{0.45\textwidth}}
\begin{center}
\begin{tikzpicture}
\draw [line width = 1] (-2,0) -- (-3,0);
\draw [line width = 1] (-2,0) -- (-2,-1);
\draw [line width = 1] (2,0) -- (3,0);
\draw [line width = 1] (2,0) -- (2,-1);
\drawgraph
\node [below=2] at (0,0) {$a$};
\node [above left] at (-2,0) {$x$};
\node [above right] at (2,0) {$y$};
\node [above=2] at (-1,1.5) {$u$};
\node [above=2] at (1,1.5) {$v$};
\node [left] at (-2,-0.75) {1};
\node [below] at (-2.75,0) {1};
\node [right] at (2,-0.75) {1};
\node [below] at (2.75,0) {1};
\node [below] at (-1,0) {2};
\node [below] at (1,0) {2};
\node at (-1.7,0.9) {2};
\node at (1.7,0.9) {2};
\node at (-0.3,0.9) {1};
\node at (0.3,0.9) {1};
\end{tikzpicture}\\(i)
\end{center}
&
\begin{center}
\begin{tikzpicture}
\draw [line width = 1] (-2,0) -- (-3,0);
\draw [line width = 1] (-2,0) -- (-2,-1);
\draw [line width = 1] (2,0) -- (3,0);
\draw [line width = 1] (2,0) -- (2,-1);
\drawgraph
\node [below=2] at (0,0) {$a$};
\node [above left] at (-2,0) {$x$};
\node [above right] at (2,0) {$y$};
\node [above=2] at (-1,1.5) {$u$};
\node [above=2] at (1,1.5) {$v$};
\node [below=2] at (0,0) {$a$};
\node [above left] at (-2,0) {$x$};
\node [above right] at (2,0) {$y$};
\node [above=2] at (-1,1.5) {$u$};
\node [above=2] at (1,1.5) {$v$};
\node [left] at (-2,-0.75) {1};
\node [below] at (-2.75,0) {1};
\node [right] at (2,-0.75) {2};
\node [below] at (2.75,0) {2};
\node [below] at (-1,0) {2};
\node [below] at (1,0) {1};
\node at (-1.7,0.9) {2};
\node at (1.7,0.9) {1};
\node at (-0.3,0.9) {1};
\node at (0.3,0.9) {2};
\end{tikzpicture}\\(ii)
\end{center}
\\
\begin{center}
\begin{tikzpicture}
\draw [line width = 1] (-2,0) -- (-3,0);
\draw [line width = 1] (-2,0) -- (-2,-1);
\draw [line width = 1] (2,0) -- (3,0);
\draw [line width = 1] (2,0) -- (2,-1);
\drawgraph
\node [below=2] at (0,0) {$a$};
\node [above left] at (-2,0) {$x$};
\node [above right] at (2,0) {$y$};
\node [above=2] at (-1,1.5) {$u$};
\node [above=2] at (1,1.5) {$v$};
\node [below=2] at (0,0) {$a$};
\node [above left] at (-2,0) {$x$};
\node [above right] at (2,0) {$y$};
\node [above=2] at (-1,1.5) {$u$};
\node [above=2] at (1,1.5) {$v$};
\node [left] at (-2,-0.75) {1};
\node [below] at (-2.75,0) {2};
\node [right] at (2,-0.75) {1};
\node [below] at (2.75,0) {2};
\node [below] at (-1,0) {1};
\node [below] at (1,0) {2};
\node at (-1.7,0.9) {2};
\node at (1.7,0.9) {1};
\node at (-0.3,0.9) {1};
\node at (0.3,0.9) {2};
\end{tikzpicture}\\(iii)
\end{center}
&
\begin{center}
\begin{tikzpicture}
\draw [line width = 1] (-2,0) -- (-3,0);
\draw [line width = 1] (-2,0) -- (-2,-1);
\draw [line width = 1] (2,0) -- (3,0);
\draw [line width = 1] (2,0) -- (2,-1);
\drawgraph
\node [below=2] at (0,0) {$a$};
\node [above left] at (-2,0) {$x$};
\node [above right] at (2,0) {$y$};
\node [above=2] at (-1,1.5) {$u$};
\node [above=2] at (1,1.5) {$v$};
\node [below=2] at (0,0) {$a$};
\node [above left] at (-2,0) {$x$};
\node [above right] at (2,0) {$y$};
\node [above=2] at (-1,1.5) {$u$};
\node [above=2] at (1,1.5) {$v$};
\node [left] at (-2,-0.75) {1};
\node [below] at (-2.75,0) {2};
\node [right] at (2,-0.75) {1};
\node [below] at (2.75,0) {1};
\node [below] at (-1,0) {2};
\node [below] at (1,0) {2};
\node at (-1.7,0.9) {1};
\node at (1.7,0.9) {2};
\node at (-0.3,0.9) {1};
\node at (0.3,0.9) {1};
\end{tikzpicture}\\(iv)
\end{center}
\end{tabular}
\end{center}
\caption{Constructing a 2-linear coloring of $G$ from a 2-linear coloring of $H$ in case~\ref{2triangle}.}
\label{fig:p2-tree}
\end{figure}
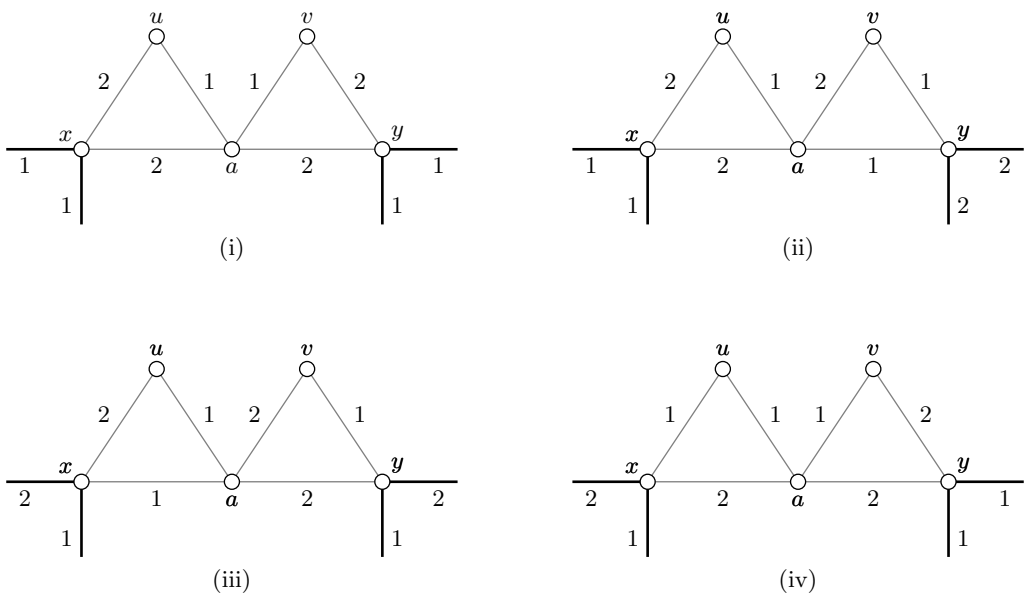

\ref{degreeone} Suppose that $G$ contains a vertex $u$ of degree at most one, and let $N(u)=\{v\}$. Let $c$ be a 2-linear coloring of the partial 2-tree $H=G-u$ using the colors $\{1,2\}$ satisfying $\mathcal{S}\setminus\{S\colon v\in S\}$, which exists by the induction hypothesis. We now construct the required 2-linear coloring of $G$ as follows. Assign every edge $e\in E(G)\setminus\{uv\}$ the color $c(e)$, and color the edge $uv$ with the color in $\Missing(v)$ if $d_G(v)=2$ and with a color in $\{1,2\}\setminus\Twice(v)$ otherwise (such a color will exist as $d_H(v)\leq 3$). It is easy to see that this is a 2-linear coloring. Note that if $d_G(v)=2$, then $v$ is not a monochromatic vertex in the coloring that is constructed. Therefore, the 2-linear coloring constructed satisfies $\mathcal{S}$.

\ref{shortcircuit} Suppose that $G$ contains two adjacent vertices $u,v$ such that $d(u)=d(v)=2$. Consider the partial 2-tree $H=G-uv$. If there exist $u',v'\in V(G)$ such that $\{u,u'\},\{v,v'\}\in\mathcal{S}$, let $\mathcal{S}'=(\mathcal{S}\cup\{\{u',v'\}\})\setminus\{\{u,u'\},\{v,v'\}\}$, otherwise let $\mathcal{S}'=\mathcal{S}\setminus\{S\colon u\in S$ or $v\in S\}$. By the induction hypothesis, there exists a 2-linear coloring $c$ of $H$ that satisfies $\mathcal{S}'$. We extend this to a 2-linear coloring of $G$ by assigning a color to the edge $uv$. If $u'$ does not exist, $u'=v$, or if $u'$ is not monochromatic, then color $uv$ with a color so that $v$ is not monochromatic (since we can afford to let $u$ be monochromatic). Otherwise, color $uv$ with a color so that $u$ is not monochromatic. It is easy to see that we have a 2-linear coloring of $G$. Clearly, if one of $u',v'$ does not exist, or if $\{u',v'\}=\{u,v\}$, then the 2-linear coloring that we have constructed satisfies $\mathcal{S}$. Otherwise, $\{u',v'\}\in\mathcal{S}'$, and so only at most one of $u',v'$ can be monochromatic in the coloring $c$ of $H$ given by the induction hypothesis. So in this case too, we have a 2-linear coloring of $G$ that satisfies $\mathcal{S}$.

\ref{C4} Suppose that $G$ contains two non-adjacent degree 2 vertices $u,v$ such that $N(u)=N(v)=\{x,y\}$. Let $H=G-\{u,v\}$. Clearly, $H$ is a partial 2-tree. If there exist $u',v'\in V(G)$ such that $\{u,u'\},\{v,v'\}\in\mathcal{S}$, let $\mathcal{S}'=(\mathcal{S}\cup\{\{u',v'\}\})\setminus\{\{u,u'\},\{v,v'\}\}$, otherwise let $\mathcal{S}'=\mathcal{S}\setminus\{S\colon u\in S$ or $v\in S\}$. If $d_H(x)=d_H(y)=2$, then define $\mathcal{S}''=\mathcal{S}'\cup\{\{x,y\}\}$, otherwise define $\mathcal{S}''=\mathcal{S}'$. By the induction hypothesis, there exists a 2-linear coloring $c$ of $H$ that satisfies $\mathcal{S}''$. We construct a 2-linear coloring of $G$ as follows. Assign every edge $e\in E(G-\{u,v\})$ the color $c(e)$. Note that as $\{x,y\}\in\mathcal{S}''$, at least one of $x,y$ is non-monochromatic in the coloring $c$ of $H$. This means that in order to construct a 2-linear coloring of $G$, we can color the edges $ux,vx,uy,vy$ in such a way that $u$ is not monochromatic and also in such a way that $v$ is not monochromatic. As before, if $u'$ does not exist, $u'=v$ or if $u'$ is not monochromatic, then we color the edges $ux,vx,uy,vy$ such that $v$ is not monochromatic, otherwise we color those edges so that $u$ is not monochromatic. It is easy to verify that the 2-linear coloring constructed in this manner satisfies $\mathcal{S}$.

\ref{triangle} Suppose that the vertices $u,v,w$ form a triangle in $G$ and $d_G(u)=2$ and $d_G(v)=3$. Consider the partial 2-tree $H=G-\{u\}$. If there is $u'\in V(G)$ such that $\{u,u'\}\in \mathcal{S}$ then we let $\mathcal{S}'=(\mathcal{S}\setminus\{u,u'\})\cup \{\{v,u'\}\}$ otherwise we let $\mathcal{S}'=\mathcal{S}$. By the induction hypothesis there is a 2-linear coloring $c$ of $H$ using the colors $\{1,2\}$ that satisfies $\mathcal{S}'$. We will now construct a 2-linear coloring of $G$ by assigning colors to the edges $uv$ and $uw$. First we color $uw$ with an arbitrary color from $\Missing(w)\cup\Once(w)$ (this set is nonempty as $d_H(w)\leq 3$). Note that $d_H(v)=2$ and if $u'$ exists, only one of $u'$ or $v$ can be monochromatic in the coloring $c$ of $H$. If $v$ is monochromatic in $H$, then we color $uv$ with the color in $\Missing(v)$, and otherwise we color $uv$ with a color in $\{1,2\}$ that is different from the color of $uw$. It is easy to verify that we now have a 2-linear coloring of $G$ that satisfies $\mathcal{S}$. 

\ref{2triangle} 
Suppose that both $\{a,u,x\}$ and $\{a,v,y\}$ induce triangles in $G$, where $d_G(u)=d_G(v)=2$. Then we have $d_G(x)=d_G(y)=4$. We consider the partial 2-tree $G=H-\{u,v,a\}$. Note that $d_H(x)=d_H(y)=2$. If there exists $u'\neq v$ such that $\{u,u'\}\in\mathcal{S}$, then define $\mathcal{A}=(\mathcal{S}\setminus\{\{u,u'\}\})\cup\{\{y,u'\}\}$, otherwise let $\mathcal{A}=\mathcal{S}$. Now if there exists $v'\neq u$ such that $\{v,v'\}\in\mathcal{A}$, then we let $\mathcal{A}'=(\mathcal{A}\setminus\{\{v,v'\}\})\cup\{\{x,v'\}\}$. Finally, we define $\mathcal{S}'=\mathcal{A}'\setminus\{\{u,v\}\}$. Let $c$ be a 2-linear coloring of $H$ using the colors $\{1,2\}$ that satisfies $\mathcal{S}'$ that exists by the induction hypothesis. We shall assign colors to the edges $ux,ua,va,vy,ax,ay$ to construct a 2-linear coloring of $G$. If both $x$ and $y$ are monochromatic or if both $x$ and $y$ are non-monochromatic, then we can color these edges in such a way that both $u$ and $v$ are non-monochromatic as shown in Figures~\ref{fig:p2-tree}(i)--\ref{fig:p2-tree}(iii). If $y$ is monochromatic and $x$ is non-monochromatic, then we can color these edges as shown in Figure~\ref{fig:p2-tree}(iv) so that $v$ is non-monochromatic and $u$ is monochromatic. Symmetrically, if $x$ is monochromatic and $y$ is non-monochromatic, then we can color these edges in such a way that $u$ is non-monochromatic and $v$ is monochromatic. It is easy to verify that in all cases, we have a 2-linear coloring of $G$ that satisfies $\mathcal{S}$.\hfill\qed
\end{proof}	

\newcommand{\Edges}{\mathrm{\texttt{Edges}}}
\newcommand{\Adj}{\mathrm{\texttt{Adj}}}

\newcommand{\once}{\mathrm{\texttt{\textup{Onc}}}}
\newcommand{\missing}{\mathrm{\texttt{\textup{Miss}}}}
\newcommand{\missingindex}{\mathrm{\texttt{Ptrs}}}

\section{Linear time algorithms}\label{sec:linalgo}
We now describe how the proofs of all the upper bounds we have derived can be converted into linear-time algorithms that produce a linear coloring of an input 3-degenerate or 2-degenerate graph using at most the number of colors given by the corresponding upper bound. We take the algorithmic framework described in detail in~\cite{conf}.

\subsection{The 3-degenerate cases}
The proof of Theorem~\ref{thm:3-degrev} can be converted into a linear-time algorithm that produces a $\left\lceil\frac{\Delta(G)}{2}\right\rceil$-linear coloring of an input 3-degenerate graph $G$ having $\Delta(G)\geq 9$ as follows. The input to the algorithm is a 3-degenerate graph having maximum degree at most $k$, where $k\geq 9$, and the algorithm generates a $\left\lceil\frac{k}{2}\right\rceil$-coloring of the graph (which can be retrieved from the $\Edges$ list, as in the algorithm of~\cite{conf}) that does not contain any monochromatic vertices. At every step, the algorithm modifies the current graph to obtain a smaller graph on which it recurses to generate a $\left\lceil\frac{k}{2}\right\rceil$-coloring that does not contain monochromatic vertices, which is then extended to a $\left\lceil\frac{k}{2}\right\rceil$-coloring of the current graph that also does not contain any monochromatic vertices. Following the proof of Theorem~\ref{thm:3-degrev}, the algorithm converts the graph $G$ at any particular stage into the smaller graph $H'$ by picking a pivot $v$, removing all the pivot edges incident on $v$ as well as any edges between two neighbours of $v$ each having degree two, and then finally pairing up and identifying as many degree one vertices as possible that were earlier neighbours of $v$---in the language of the proof of Theorem~\ref{thm:3-degrev}, we remove all edges in $F\cup I$ and then pair up the vertices in $W$ and identify them to construct $H'$. It is not difficult to see that if the number of pivot edges incident on $v$ in $G$ is $t=|F|$, then $G$ can be modified into the graph $H'$ in $O(t)$ time. In fact, it is straightforward to see that the graph $G$ can be modified into the graph $H=G-(F\cup I)$ in $O(t)$ time. For converting $H$ to $H'$, we initialize a list $L$ of ``sets'' (which are again implemented as lists). Initially, $L=\{\{u\}\colon u\in W\}$. We now arbitrarily choose two vertices $u,u'$ from two different sets in $L$ and check if their single neighbour in $H$ is the same vertex. If yes, we merge the two sets from which $u$ and $u'$ were chosen. Otherwise, we identify $u$ with $u'$ and remove both $u$ and $u'$ from their respective sets in $L$. If some set in $L$ becomes empty in the process, we remove it from $L$. In $O(t)$ time, the list $L$ will either become empty or will contain only a single set. If $L$ is nonempty, then the single set that it contains is the set $W'$ from the proof of Theorem~\ref{thm:3-degrev}. The algorithm now recurses on $H'$ to produce a $\left\lceil\frac{k}{2}\right\rceil$-coloring of $H'$ that contains no monochromatic vertices. Once this is done, we split back the vertices that were identified to obtain the graph $H$ back from $H'$, but retaining the colors on the edges. Note that while doing this, no monochromatic path will get split, since the vertices in $H'$ that get split have degree two and are not monochromatic in the coloring of $H'$. This means that even though we identify vertices and split them back, we can work with just $\left\lceil\frac{k}{2}\right\rceil$-linear colorings and do not need ``pseudo $\left\lceil\frac{k}{2}\right\rceil$-linear colorings'' as in~\cite{conf}. Now it only remains to be shown that the edges in $F\cup I$ can be added back to $H$ and colored in just $O(t)$ time in order to obtain a $\left\lceil\frac{k}{2}\right\rceil$-linear coloring of $G$. Since $d_G(v)\leq t+3$, and coloring the edges of $I$ once we have the required $\left\lceil\frac{k}{2}\right\rceil$-linear coloring of $G-I$ is easy, we only need to show that the $\left\lceil\frac{k}{2}\right\rceil$-linear coloring of $H$ can be extended to a $\left\lceil\frac{k}{2}\right\rceil$-linear coloring of $G-I$ containing no monochromatic vertices in $O(d_G(v))$ time. We start by iterating through the available colors $1,2,\ldots,\left\lceil\frac{k}{2}\right\rceil$ in ascending order. For each color $i\in\{1,2,\ldots,\left\lceil\frac{k}{2}\right\rceil\}$, we assign $i$ to two as yet uncolored edges in $F$ if $i$ does not appear on the (at most three) edges incident on $v$ in $H$, and we assign $i$ to one as yet uncolored edge in $F$ if $i$ appears on exactly one of the edges incident on $v$ in $H$ (if $i$ occurs twice on the edges incident on $v$, we do not assign $i$ to any edges in $F$). Each time an edge $uv$ in $F$ gets colored with a color $i$ in this way, we update the color of that edge in the $\Edges$ list, and also modify the lists $\missing(v)$, $\once(v)$, $\missing(u)$, and $\once(u)$ as usual. But we do not modify any of the ``path objects''---i.e., we do not alter our record of the monochromatic paths in the coloring at this stage; in fact, the edge coloring of $G-I$ that we have at this stage may contain monochromatic cycles and/or monochromatic vertices, both of which we want to avoid. Nevertheless, it is clear that it takes only $O(d_G(v))$ time to color all the edges of $F$ in this way. We now follow the proof of Theorem~\ref{thm:3-degrev} in order to permute the colors on the edges in $F$ so as to get the required $\left\lceil\frac{k}{2}\right\rceil$-linear coloring of $G-I$. Note that the path objects can be used for detecting the presence of monochromatic cycles and also for eliminating them in the way described in the proof of Theorem~\ref{thm:3-degrev}.
It is not difficult to see that the same kind of data structures can be used to encode the partial linear colorings generated during the various stages of the algorithm implying that the linear arboricity of 3-degenerate graphs having maximum degree at least 9 is linear time computable. 

The same ideas can be adopted for converting the proofs of Theorem~\ref{thm:3dg7} and Theorem~\ref{thm:3dg5} into linear-time algorithms that generate linear colorings for 3-degenerate graphs of maximum degree 7 and 5 using at most 4 and at most 3 colors respectively. It is also straightforward to convert the proof of Theorem~\ref{thm:3dg3} into a linear-time algorithm that constructs a 2-linear coloring for 3-degenerate graphs of maximum degree 3.

\subsection{The 2-degenerate cases}
The proof of Theorem~\ref{greater4} can be converted into a linear time algorithm that outputs a $\left\lceil\frac{\Delta(G)}{2}\right\rceil$-linear coloring of an input 2-degenerate graph $G$ having $\Delta(G)\geq 5$ by utilizing the framework described in~\cite{conf} (note that the algorithm can just use $k$-linear colorings instead of the ``pseudo-$k$-linear colorings'' of~\cite{conf} as we never identify vertices and split them later, so no monochromatic paths ever get split into two paths). Note that this means that the problem of computing linear arboricity is linear time solvable in 2-degenerate graphs having maximum degree at least 5. It seems natural to ask the following question: is the problem of computing linear arboricity NP-hard when restricted to 2-degenerate graphs of maximum degree at most 4? We do not know the answer to this question, but would like to remark here that this problem seems similar to Conjecture~4 in~\cite{Cygan}.

It is not difficult to see that using the techniques and data structures described in~\cite{conf}, the proof of Theorem~\ref{lessequal4} can be converted into a linear time algorithm that given an input 2-degenerate graph $G$ having $\Delta(G)\leq 4$ and $|E(G)|\geq 2|V(G)|-5$ constructs a 2-linear coloring of $G$ containing at most one monochromatic vertex. Note that in this case, we do sometimes split back an identified vertex of degree 2 which may even be a monochromatic vertex. This means that we will need the full machinery of the algorithm of~\cite{conf}, including the use of pseudo 2-linear colorings, to implement the algorithm to run in linear time.

For converting the proof of Theorem~\ref{thm:bipartite} into a linear time algorithm that computes a 2-linear coloring with no monochromatic vertex for any input bipartite 2-degenerate graph having maximum degree at most 4, we do not need to pseudo 2-linear colorings, since even though we split back identified vertices, they are always vertices of degree 2 that are not monochromatic. So no monochromatic path gets split in the process, which means that we do not need to work with pseudo 2-linear colorings.

The proof of Theorem~\ref{thm:p2tree} can also be converted into a linear time algorithm that computes a 2-linear coloring of any input partial 2-tree having maximum degree at most 4 as explained below. Note that since we do not identify a pair of vertices and then split them back during the induction step, no monochromatic paths need to be split during the algorithm, which means that it can work with just 2-linear colorings instead of pseudo-2-linear colorings. For the algorithm to work in linear time, one of the five configurations listed in Observation~\ref{obs:configs} has to be found in $O(1)$ time during the induction step. This can be accomplished, for example, as follows. For any vertex $u$, we can determine whether it is part of a configuration of one of the types mentioned in Observation~\ref{obs:configs} by doing a local search starting from $u$. Since every vertex has at most 4 neighbors, this can be done in $O(1)$ time. At every stage of the algorithm, we maintain a list of all the configurations in the current graph. At the start, we compute the list of all configurations in the original graph $G$ in linear time by performing the local search starting from all vertices of $G$. With each vertex, we also store a list of pointers to the configurations that it is a part of. (Note that a vertex cannot be part of more than $O(1)$ configurations. Further, note that a naive approach may lead to the same configuration being detected more than once---the local search from each of its vertices may detect the configuration. Therefore the list of all configurations may contain duplicates, but this does not affect the runtime complexity of the algorithm as a configuration is duplicated only as many times as the number of vertices in it; i.e. $O(1)$ times.) During the induction step, we remove vertices or edges to obtain a smaller graph, and in this process the degrees of at most five vertices in the resultant graph will be different from their degrees in the original graph. We remove all configurations that these vertices were a part of, and then determine all configurations that they are a part of in the new graph by again doing a local search starting from them. This takes only $O(1)$ time. The only remaining detail is regarding the implementation of the list $\mathcal{S}$ of pairs of degree two vertices. We do not explicitly store the list $\mathcal{S}$, but instead, each pair $\{u,v\}$ of this list can be encoded by storing a pointer to $v$ on $u$ and a pointer to $u$ on $v$. It is easy to see that these pointers can be easily manipulated for achieving the modifications to the list $\mathcal{S}$ that we need.

%\section{Conclusion}
%We showed that if $G$ is either a 2-degenerate graph having $\Delta(G)\geq 5$ or a 3-degenerate graph having $\Delta(G)\geq 9$, then $\chi'_l(G)=\left\lceil\frac{\Delta(G)}{2}\right\rceil$. It was also shown that many subclasses of 2-degenerate graphs having maximum degree at most 4 have 2-linear colorings. For many graph classes, it could be the case that the linear arboricity becomes $\left\lceil\frac{\Delta}{2}\right\rceil$ if $\Delta$ is large enough. For example, Cygan et al.~\cite{Cygan} showed that $\chi'_l(G)=\left\lceil\frac{\Delta(G)}{2}\right\rceil$ for all planar graphs if $\Delta(G)\geq 9$. Such classes of graphs clearly should not contain regular graphs with arbitrarily large degree, since the linear arboricity of $\Delta$-regular graphs is known to be $\left\lceil\frac{\Delta+1}{2}\right\rceil$. We leave open the question as to what is the lowest integer $t$ so that every 3-degenerate graph having $\Delta(G)\geq t$ has a $\left\lceil\frac{\Delta(G)}{2}\right\rceil$-linear coloring.
\bigskip

\noindent\textbf{Acknowledgements:}
The first author was partially supported by the fixed grant scheme SERB-MATRICS project number MTR/2019/000790.

\bibliography{lac}
\end{document}